\providecommand{\leftsquigarrow}{%
  \mathrel{\mathpalette\reflect@squig\relax}%
}
\newcommand{\reflect@squig}[2]{%
  \reflectbox{$\m@th#1\rightsquigarrow$}%
}
\let\epsilon=\relax
\newcommand{\C}{C_{p^rq^s}}
\newcommand{\Cat}{\text{Cat}}
\newcommand{\epsilon}{\varepsilon}
\newcommand{\Hull}{\text{Hull}}
\let\ij=\relax
\newcommand{\ij}[1]{{\normalfont \scalebox{2}[1]{[}}(#1){\normalfont \scalebox{2}[1]{]}}}
\newcommand{\0}{\ij{0,0}}
\theoremstyle{plain}
\newtheorem{theorem}{Theorem}[section]
\newtheorem{lemma}[theorem]{Lemma}
\newtheorem{proposition}[theorem]{Proposition}
\newtheorem{corollary}[theorem]{Corollary}
\newtheorem*{theorem*}{Theorem}
\theoremstyle{definition}
\newtheorem{definition}[theorem]{Definition}
\newtheorem{example}[theorem]{Example}
\newtheorem{remark}[theorem]{Remark}
\newtheorem{notation}[theorem]{Notation}
\let\c@equation\c@theorem
\numberwithin{equation}{section}
\crefname{lemma}{Lemma}{Lemmas}
\crefname{theorem}{Theorem}{Theorems}
\crefname{definition}{Definition}{Definitions}
\crefname{proposition}{Proposition}{Propositions}
\crefname{remark}{Remark}{Remarks}
\crefname{corollary}{Corollary}{Corollaries}
\crefname{example}{Example}{Examples}
\title{Uniquely Compatible Transfer Systems for Cyclic Groups of Order $p^rq^s$}
\author{Kristen Mazur}
\address[Mazur]{Department of Mathematics and Statistics, Elon University, Elon, NC 27244, USA}
\email{kmazur@elon.edu}
\author{Ang\'{e}lica M.~Osorno}
\address[Osorno]{Department of Mathematics and Statistics, Reed College, Portland, OR 97202, USA}
\email{aosorno@reed.edu}
\author{Constanze Roitzheim}
\address[Roitzheim]{School of Mathematics, Statistics and Actuarial Science, University of Kent, Kent CT2 7NF, UK}
\email{c.roitzheim@kent.ac.uk}
\author{Rekha Santhanam}
\address[Santhanam]{Department of Mathematics, IIT Bombay, Powai, Mumbai 400076, India}
\email{reksan@iitb.ac.in}
\author{Danika Van Niel}
\address[Van Niel]{Department of Mathematics, Michigan State University, East Lansing, MI 48824, USA}
\email{vannield@msu.edu}
\author{Valentina Zapata Castro}
\address[Zapata Castro]{Department of Mathematics, University of Virginia, Charlottesville, VA 22904, USA}
\email{vz6an@virginia.edu}
\begin{document}

\begin{abstract}
 Bi-incomplete Tambara functors over a  group $G$ can be understood in terms of  compatible pairs of $G$-transfer systems.
 In the case of $G=C_{p^n}$,  Hill, Meng and Li gave a necessary and sufficient condition for compatibility and computed the exact number of compatible pairs. 
 In this article, we study compatible pairs of $G$-transfer systems for the case $G=C_{p^rq^s}$ and identify conditions when such transfer systems are uniquely compatible in the sense that they only form trivially compatible pairs. This gives us new insight into collections of norm maps that are relevant in equivariant homotopy theory.  
\end{abstract}

\maketitle


\section{Introduction}

For a finite group $G$, the equivariant stable category is highly complex. For instance, the stable category is not  uniquely determined by the group. The choice depends on which  $G$-orbits  are required to be dualizable, and this choice affects which transfer maps, i.e., which ``wrong-way maps'' between fixed-points for nested subgroups, become  part of the data of equivariant spectra.

Even in the \emph{genuine} equivariant stable category, where all the orbits are dualizable and spectra support all possible transfer maps, there is variation in the notion of $E_\infty$ ring spectra. As noted by Blumberg and Hill \cite{BHOperads}, highly structured commutative ring $G$-spectra can support a range of norm maps, which are neatly encoded by $N_\infty$ $G$-operads.

Blumberg and Hill show in \cite{BHOperads} that an $N_\infty$ $G$-operad is characterized up to homotopy by the collection of wrong-way maps
(transfers if additive, norms if multiplicative) it encodes. Since transfers and norms are indexed by nested pairs  of subgroups of $G$, keeping track of the transfers and norms gives rise to the notion of a \emph{$G$-transfer system}, which is a subposet of the poset of subgroups of $G$ under inclusion satisfying certain closure conditions, see \cref{def:tranfsys}. Furthermore, the combined work of Balchin-Barnes-Roitzheim \cite{NinftyOperads}, Blumberg-Hill \cite{BHOperads}, Bonventre-Pereira \cite{BPGenuineEquivaiantOperads}, Guti\'errez-White \cite{GWViaOperads}, and Rubin \cite{RubinCombNinftyOperads,RubinDetectingOperads} shows that the homotopy category of $N_\infty$ $G$-operads is equivalent to the poset of $G$-transfer systems (given by inclusion). Thus, $G$-transfer systems can be thought of as combinatorial gadgets that control homotopy commutative operations in the equivariant setting. 

Given an $N_\infty$ $G$-operad $\mathcal{O}$ and an $\mathcal{O}$-algebra $X$ in $G$-spaces, Blumberg and Hill \cite{BHOperads} show that $\underline{\pi}_0(X)$ has the structure of an incomplete Mackey functor, with transfer maps generated precisely by those encoded in the $G$-transfer system associated to $\mathcal{O}$. Similarly, if $R$ is an $\mathcal{O}$-algebra in the category of genuine $G$-spectra, they show that $\underline{\pi}_0(R)$ is an incomplete Tambara functor, with norms generated by those in the $G$-transfer system \cite{BHIncomplete}. It is then natural to study what happens when both the additive and the multiplicative structures are incomplete, i.e., study $N_\infty$ algebras in an incomplete category of $G$-spectra. 

Blumberg and Hill \cite{Biincomplete} study the algebraic analogue: bi-incomplete Tambara functors, which have incomplete collections of transfers and norms. Since in the classical theory of Tambara functors the norm of a sum depends on certain transfers (see \cite[Theorems 2.4, 2.5]{HillMazur}), it is not surprising that the presence of a given norm implies the existence of certain transfers. Combined work of Blumberg-Hill \cite{Biincomplete} and Chan \cite{ChanTambara}, gives precise combinatorial conditions on a pair $(T_m,T_a)$ of $G$-transfer systems that imply that $T_m$ gives the multiplicative norms and $T_a$ gives the additive transfers of a bi-incomplete Tambara functor. Such pairs of $G$-transfer systems are called \emph{compatible}.

 Other prior work on compatible pairs consists of enumerations; Hill, Meng, and Li \cite{Hill-Meng-Nan} for $G=C_{p^r}$ and Ormsby \cite{Ormsby}  for $G=C_p \times C_p$. Both results are based on previous enumerations of the corresponding $G$-transfer systems \cite{NinftyOperads,Baoetal}. In this paper, we study compatible pairs of $G$-transfer systems when $G$ is in the family of cyclic groups of the form $\C$ with $p$ and $q$ distinct primes. The enumeration of transfer systems for $C_{p^r q^s}$ is notoriously difficult (see \cite[Section 4]{NinftyOperads}), so instead of enumerating compatible pairs of $\C$-transfer systems, this paper focuses on determining when a $\C$-transfer system is the multiplicative part of a compatible pair in  a certain unique way.

Indeed, every  $G$-transfer system is multiplicatively compatible with two (not necessarily distinct)  $G$-transfer systems. First, all are compatible with the \emph{complete} $G$-transfer system $T_c$, which contains all subgroup relations.   Second, given a $G$-transfer system $T$, there is a minimal (in terms of inclusion) $G$-transfer system $\Hull(T)$ such that $(T, \Hull(T))$ is compatible. We thusly think of $(T,T_c)$ and $(T,\Hull(T))$ as being the two \textit{trivially compatible} pairs for any given $T$. Our first main result identifies those transfer systems for which these two coincide. The conditions for this theorem depend on the number and shape of the \emph{connected components} of $T$, which are the path components of the graph representing $T$.

\begin{theorem*}[\Cref{cor:connected} and \cref{thm:OneCompLSP}]
  Let $G$ be any finite group and $T$ be a $G$-transfer system. Then $\Hull(T)=T_{c}$ if and only if $T$ is connected. In this case, $T$ is the multiplicative part of exactly one compatible pair.
  \end{theorem*}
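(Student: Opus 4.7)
The plan is to establish the biconditional and then derive the uniqueness statement from it. I would begin by recording the two extremal facts already in the introduction: (i) the complete transfer system $T_c$ is compatible with every $T$, and (ii) by definition $\Hull(T)$ is the minimum transfer system such that $(T,\Hull(T))$ is compatible. These imply that any compatible partner $T_a$ of $T$ satisfies $\Hull(T)\subseteq T_a\subseteq T_c$. Granting that this interval is in fact closed under passing to larger transfer systems (something the paper presumably records as a lemma), the uniqueness claim ``exactly one compatible pair'' reduces to showing $\Hull(T)=T_c$. Thus the whole theorem collapses to the biconditional about connectedness.

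For the implication $T$ connected $\Rightarrow \Hull(T)=T_{c}$, I would work directly with Chan's combinatorial characterization of compatibility. The characterization forces, for each relation $A\to B$ in $T_m$, a specific family of additive relations into $T_a$; crucially, these forced relations involve endpoints that are still ``accessible'' from $A$ and $B$ through the transfer-system structure. The idea is then: pick any subgroup relation $K\leq H$ of $G$; since $T$ is connected, $H$ and $K$ are joined in $T$ by a zig-zag of relations in $T$. Walking along this zig-zag and invoking the compatibility forcing at each step (together with closure under restriction and composition of transfer systems) should propagate enough relations into $\Hull(T)$ to conclude $K\to H\in\Hull(T)$. Hence $\Hull(T)=T_c$.

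For the reverse implication, I would prove the contrapositive: if $T$ has at least two connected components, then $\Hull(T)\neq T_c$. The key observation to make precise is a \emph{locality} statement: the relations introduced when passing from $T$ to $\Hull(T)$ never bridge two components of $T$. Concretely, I would define an equivalence relation on subgroups by declaring $H\sim K$ if they lie in the same component of $T$, and then verify that each of the forced additive relations coming from compatibility keeps its two endpoints in the same class. Iterating, the additive closure $\Hull(T)$ still has the same components as $T$, and so any relation $K<H$ in $G$ whose endpoints lie in different components of $T$ witnesses $\Hull(T)\subsetneq T_c$.

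The main obstacle is this locality claim in the reverse direction. The compatibility conditions produce, for each relation $A\to B$ in $T$, a collection of new relations whose endpoints are obtained from $A$, $B$, and various subgroups of $G$ via intersection/conjugation operations; one must verify that every such newly forced endpoint remains in the component of $A$ and $B$. This is the delicate combinatorial step, and it is the only place in the proof where the precise form of Chan's compatibility criterion is really used. Once this is done, both directions of the biconditional are in hand, and the uniqueness of the compatible pair follows immediately from $\Hull(T)=T_c$ squeezing the interval of compatible partners down to a single point.
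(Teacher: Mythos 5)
Your overall architecture matches the paper's (squeeze every compatible partner between $\Hull(T)$ and $T_c$, then prove the biconditional about connectedness), but as written the proposal has two problems, one fixable by deletion and one a genuine gap. First, the auxiliary lemma you grant yourself --- that the set of compatible partners is closed under passing to larger transfer systems --- is \emph{false}: the paper exhibits $C_{pq}$-transfer systems $T\subset T'$ with $\Hull(T)=T\subseteq T'$ but $(T,T')$ not compatible, so compatibility is not upward closed. Fortunately your final squeeze never uses it: you only need that every compatible partner contains $\Hull(T)$ (\Cref{prop:BigTContainsHull}, proved by taking $B=B\cap C$ in the compatibility square, which recovers saturation) and that $T_c$ is compatible with everything (\Cref{prop:SaturTCompatible}); so delete the lemma rather than rely on it. Second, and more seriously, the forward direction ($T$ connected $\Rightarrow \Hull(T)=T_c$) is missing its mechanism. ``Walking along the zig-zag and invoking the compatibility forcing at each step \dots should propagate enough relations'' is not an argument: a forcing step applied at an inner wedge $A_1\to A_2\leftarrow A_3$ of the zig-zag produces nothing by itself, and you never say how an arbitrarily long undirected path yields $K\to H$. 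The missing idea is the paper's \Cref{lem:zigzag}: by restriction, each wedge $A_1\to A_2\leftarrow A_3$ can be replaced by $A_1\leftarrow A_1\cap A_3\to A_3$, and composing parallel arrows inductively shortens any path to length two; once $K\le H$ are joined by $K\leftarrow L\to H$ with $L\le K\le H$, the two-out-of-three saturation condition puts $K\to H$ into $\Hull(T)$ directly. Note also that routing this through ``Chan's combinatorial characterization'' mislocates the engine: $\Hull(T)$ is by definition the saturated hull, so the forward direction needs no compatibility at all, only saturation.

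Your reverse direction is in better shape: the locality invariant you propose (no forced relation ever bridges two components of $T$) is exactly the right statement, and the verification you defer does go through --- for a forced edge $C\to A$, restricting $B\to A\in T$ along $C$ gives $B\cap C\to C\in T$, so both endpoints of the new edge were already in one component; the paper's \Cref{fact:HullCompsComplete} performs the analogous check for saturation-added edges, noting that each added $K\to H$ comes with $L\to K$ and $L\to H$ already in $T$. But since you explicitly leave that verification open and the forward direction's propagation unexecuted, the proposal is a correct plan with its two load-bearing steps missing rather than a proof; both steps are fillable, and when filled they essentially reproduce the paper's \Cref{lem:zigzag} and \Cref{fact:HullCompsComplete}.
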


We further identify necessary and sufficient conditions for when any $\C$-transfer system $T$ is only the multiplicative part of the two trivially compatible pairs. In other words, we identify conditions   for when $T$ is only multiplicatively compatible with $T_c$ and $\Hull(T)$. We call such a $T$ \emph{lesser simply paired.} The conditions continue to depend on the number and shape of the connected components of $T$.  By the above theorem,  if a $\C$-transfer system $T$ has a single connected component then it is lesser simply paired. The following theorem gives  conditions under which $\Hull(T)$ and $T_c$ are distinct and $T$ is lesser simply paired.  

\begin{theorem*}[\cref{thm:3IsNotLSP,thm:When2CompLSP}]
  Let $T$ be a $C_{p^rq^s}$-transfer system. Then $(T,T_{c})$ and $(T,\Hull(T))$ are distinct and are the only compatible pairs with $T$ as the multiplicative part if and only if $T$ has two connected components, and the connected component of the trivial subgroup $e$ is either $\{ e, C_p, \dots, C_{p^r}\}$ or $\{e, C_q, \dots, C_{q^s}\}$.
  \end{theorem*}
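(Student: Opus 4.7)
The plan is to reduce the biconditional to the first introductory theorem together with the two cited supporting combinatorial results. The ``distinct'' requirement on the trivially compatible pairs $(T,T_c)$ and $(T,\Hull(T))$ is equivalent, by \cref{cor:connected}, to $T$ being disconnected; once disconnection is secured, the content becomes identifying exactly which disconnected $C_{p^rq^s}$-transfer systems are lesser simply paired.

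For the forward direction, I would assume that $(T,T_c)$ and $(T,\Hull(T))$ are distinct and exhaust the compatible pairs with multiplicative part $T$. Distinctness forces $\Hull(T) \neq T_c$, so by \cref{cor:connected} the transfer system $T$ has at least two connected components. I would then split on the number of components. If $T$ has three or more components, the goal is to construct an intermediate additive transfer system $T_a$ with $\Hull(T) \subsetneq T_a \subsetneq T_c$ that is still compatible with $T$, contradicting the lesser simply paired assumption; this is the content of \cref{thm:3IsNotLSP}. Hence $T$ has exactly two components. Among two-component $T$, one then rules out every shape of the $e$-component except the full $p$-chain $\{e, C_p, \dots, C_{p^r}\}$ and the full $q$-chain $\{e, C_q, \dots, C_{q^s}\}$, again by exhibiting a concrete intermediate compatible $T_a$ whenever the $e$-component contains a ``corner'' in the subgroup lattice.

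For the backward direction, assume $T$ has exactly two components with the $e$-component equal to $\{e, C_p, \dots, C_{p^r}\}$, the $q$-case being symmetric. Since $T$ is disconnected, \cref{cor:connected} immediately gives $\Hull(T) \neq T_c$, so the two trivially compatible pairs are distinct. The remaining task is to show that every $T_a$ compatible with $T$ equals either $\Hull(T)$ or $T_c$. This amounts to an exhaustive check over the interval of transfer systems between $\Hull(T)$ and $T_c$: the Blumberg--Hill--Chan compatibility criterion forces any such $T_a$ to contain transfers generated by the norms already present in $T$, and a case analysis using the planar structure of the $C_{p^rq^s}$-subgroup lattice shows $T_a$ must be one of the two extremes. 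This is the content of \cref{thm:When2CompLSP}.

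The main obstacle is the combinatorial case analysis underpinning both \cref{thm:3IsNotLSP} and \cref{thm:When2CompLSP}: producing a concrete intermediate compatible $T_a$ when the conditions fail, and ruling out all such intermediates when they hold. The key tools are the explicit description of $\Hull(T)$ as a minimal compatible partner and the Blumberg--Hill--Chan compatibility criterion, but navigating these within the two-dimensional grid of subgroups of $C_{p^rq^s}$, where norms and transfers can interact along two independent prime directions, is what makes the analysis delicate.
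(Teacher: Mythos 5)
Your top-level reduction is exactly the paper's: distinctness of the two trivial pairs is equivalent to disconnectedness of $T$ via \cref{cor:connected}, the case of three or more components is handled by \cref{thm:3IsNotLSP}, and the two-component case by \cref{thm:When2CompLSP}. However, the proposal defers precisely the steps that constitute the proof, and where it does commit to specifics it goes wrong. First, your criterion for the two-component case --- ruling out every $e$-component that ``contains a corner in the subgroup lattice'' --- does not match the actual case division. By \cref{prop:2CompShapes} (which rests on \cref{cor:GCompIsRect}, that the component of the full group is a rectangle), the $e$-component must be $V_\ell$, $H_k$, or $L_{(\ell,k)}$. The L-shapes have a corner, but the thick rectangles $H_k$ with $k>0$ and $V_\ell$ with $\ell>0$ do not, and these must be excluded by a separate and in fact more delicate argument (\cref{lem:ThickNotLSP}): there one must track that every edge of $T'\setminus T$ produced by the closure has source on the bottom row, to show no edge $(a,b)\to(x,y)$ with $b>0$ ever arises. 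The L-shaped case (\cref{lem:LNotLSP}) is the easy one, since no compatibility square with $B\cap C=(0,0)$ can even be formed. A sketch that only sees the corner case misses half the forward direction.

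Second, the claim that one can ``exhibit a concrete intermediate compatible $T_a$'' presupposes the paper's key technical tool, \cref{lem:smallestcompatible}: the existence and controllability of the \emph{smallest} transfer system compatible with $T$ containing a chosen edge out of $(0,0)$, including the nontrivial verification that one pass of the compatibility closure suffices and that the result is again a transfer system. Without that lemma (or a substitute), asserting that the constructed $T_a$ is compatible yet incomplete is unsupported; it is where the real work lives. Relatedly, in the $\geq 3$-component case you omit the preliminary step that the lexicographically smallest vertex outside $\0_T$ cannot lie in $\ij{r,s}_T$ (the paper shows via \cref{lem:SmallestIsSmallest} and \cref{lem:ArromFromSmallest} that this would force exactly two components). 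Finally, your backward direction --- an ``exhaustive check over the interval of transfer systems between $\Hull(T)$ and $T_c$'' --- is not an argument, and the paper does something sharper: it shows that adjoining \emph{any} single cross-component edge $(i,0)\to(x,y)$ to $\Hull(T)$ forces completeness, via restriction to obtain $(0,0)\to(0,1)$ and then one compatibility square with $B=(0,1)$, $C=(r,0)$, $A=(r,1)$ (using \cref{lem:ArromFromSmallest}) to force $(r,0)\to(r,1)$, after which restriction, \cref{fact:HullCompsComplete}, and transitivity yield $T_c$. Since any compatible $T_a\supsetneq\Hull(T)$ contains some such edge, $T_a=T_c$ with no interval enumeration needed. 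In short: correct skeleton, but the shape classification is misstated and the three constructive engines of the proof are missing.
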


\subsection*{Outline}

This paper is organised as follows. In \Cref{sec:background} we define transfer systems and saturated transfer systems and specify how we consider those notions for $C_{p^rq^s}$.  In \Cref{sec:pairs} we introduce compatible pairs, define lesser simply paired transfer systems, and prove basic results relating compatibility to saturation.  Finally, \Cref{sec:LSPOnGrid} is occupied with our main result, namely the characterisation of lesser simply paired transfer systems on $C_{p^rq^s}$.

\subsection*{Acknowledgements} We thank the organizers of the Women in Topology IV Workshop and the Hausdorff Institute for Mathematics in Bonn for providing this opportunity for collaborative research. We thank Scott Balchin, David Barnes, David Chan and Mike Hill for insightful conversations. We thank the anonymous referee for their valuable suggestions and comments. Osorno was partially supported by NSF Grant DMS-2204365. Van Niel was partially supported by NSF Grants DMS-RTG 2135960 and DMS-RTG 2135884. Zapata Castro was partially funded by NSF Grant DMS-1906281. Mazur's travel was supported by Elon University and Santhanam's travel was supported by the Indian Institute of Technology Bombay.

\section{Transfer Systems- Background and Notation}\label{sec:background}

We begin this section by defining a  $G$-transfer system, its connected components, and its saturated hull for any finite group $G$. We then transition to a discussion of $\C$-transfer systems.

\begin{definition}\label{def:tranfsys} Let $G$ be a finite group. A $G$-\emph{transfer system} $T$ is a partial order relation  on the set of subgroups of $G$, represented by edges
$\to$, that satisfies the following conditions.
\begin{enumerate}
    \item (Subgroup) $K \to H$ implies $K \leq  H$,
    \item(Reflexivity) $H\to H$ for all $H\leq G$,
    \item (Transitivity) $L\to K$ and $K\to H$   implies $L\to H$,
    \item (Restriction) $K\to H$ and $L\leq G$ implies $(K\cap L)\to (H\cap L)$,
    \item (Conjugation) $K\to H$ implies $gKg^{-1}\to gHg^{-1}$ for all $g\in G$.
\end{enumerate}
\end{definition}

We represent  $G$-transfer systems as directed graphs with vertices as subgroups and edges as the partial order relation, and thus we use the terms vertices and subgroups interchangeably. Moreover, we omit drawing  edges that represent the reflexivity condition.

A basic example of a $G$-transfer system is the full subgroup lattice of $G$, which is the transfer system that has all possible relations.

 \begin{definition}[Complete Transfer System, $T_c$]\label{def:CompleteTS}
We call the $G$-transfer system that has all possible edges the \emph{complete} transfer system, denoted $T_c$. 
\end{definition}

\begin{example}\label{ex:CompleteTS}
The following represents the complete $C_{p^2q}$-transfer system.

\centering{
\begin{tikzpicture}[>=stealth, bend angle=20, baseline=(current bounding box.center)]
    \node (Cq) at (0,0) {$C_q$};
    \node (Cpq) at (2,0) {$C_{pq}$};
    \node (Cp2q) at (4,0) {$C_{p^2q}$};
    \node (e) at (0,-2) {$e$};
    \node (Cp) at (2,-2) {$C_p$};
    \node (Cp2) at (4,-2) {$C_{p^2}$};

    \draw[->] (Cq) -- (Cpq);
    \draw[->, bend left] (Cq) to (Cp2q);
    \draw[->] (e) -- (Cq);
      \draw[->] (e) -- (Cpq);
    \draw[->, bend right] (e) to (Cp2);
    \draw[->] (e) to (Cp);
    \draw[->] (e) to (Cp2q);
    \draw[white, line width=4pt] (Cp) -- (Cpq);
    \draw[->] (Cp) to (Cpq);
    \draw[->] (Cp) to (Cp2);
    \draw[->] (Cp) to (Cp2q);
    \draw[->] (Cp2) to (Cp2q);
    \draw[->] (Cpq) to (Cp2q);

    \draw[white] (-0.6,-2.6)--(4.6,-2.6)--(4.6,0.6)--(-0.6,0.6)--(-0.6,-2.6);
\end{tikzpicture}}

\end{example}
 
\begin{example}\label{firstexample} The diagram below shows a more interesting $C_{p^2q}$-transfer system.

\centering{
\begin{tikzpicture}[>=stealth, bend angle=20, baseline=(current bounding box.center)]
    \node (Cq) at (0,0) {$C_q$};
    \node (Cpq) at (2,0) {$C_{pq}$};
    \node (Cp2q) at (4,0) {$C_{p^2q}$};
    \node (e) at (0,-2) {$e$};
    \node (Cp) at (2,-2) {$C_p$};
    \node (Cp2) at (4,-2) {$C_{p^2}$};

    \draw[->] (Cq) -- (Cpq);
    \draw[->, bend left] (Cq) to (Cp2q);
    \draw[->] (e) -- (Cq);
      \draw[->] (e) -- (Cpq);
    \draw[->, bend right] (e) to (Cp2);
    \draw[->] (e) to (Cp);
    \draw[->] (e) to (Cp2q);
    \draw[white, line width=4pt] (Cp) -- (Cpq);
    \draw[->] (Cp) to (Cpq);

    \draw[white] (-0.6,-2.6)--(4.6,-2.6)--(4.6,0.6)--(-0.6,0.6)--(-0.6,-2.6);
    
\end{tikzpicture}}
\end{example}

Drawing a $G$-transfer system as a graph allows us to consider its \textit{connected components}, which we define as the connected components of the underlying undirected graph.  

\begin{definition}
    We say that two vertices in a $G$-transfer system are in the same \emph{connected component} if there is an undirected path from one to the other.
\end{definition}

\begin{example} The transfer system in \Cref{firstexample} has  one connected component. In \Cref{fig:ConnCompsEx} we give three examples of $C_{p^2q}$-transfer systems with their connected components outlined in blue. Note that, in the middle transfer system, $C_{pq}$ and $C_{p^2}$  are in the same connected component even though there is no edge between them. 
\end{example}

\begin{figure}[hbt!]
    \centerline{
    \resizebox{\textwidth}{!}{%
\begin{tikzpicture}[>=stealth, baseline=(current bounding box.center), bend angle=20]

\node (00a) at (-8,0){$e$};
\node (10a) at (-6,0){$C_p$};
\node (20a) at (-4,0){$C_{p^2}$};
\node (01a) at (-8,2){$C_q$};
\node (11a) at (-6,2){$C_{pq}$};
\node (21a) at (-4,2){$C_{p^2q}$};
\node (00b) at (0,0){$e$};
\node (10b) at (2,0){$C_p$};
\node (20b) at (4,0){$C_{p^2}$};
\node (01b) at (0,2){$C_q$};
\node (11b) at (2,2){$C_{pq}$};
\node (21b) at (4,2){$C_{p^2q}$};
\node (00c) at (8,0){$e$};
\node (10c) at (10,0){$C_p$};
\node (20c) at (12,0){$C_{p^2}$};
\node (01c) at (8,2){$C_q$};
\node (11c) at (10,2){$C_{pq}$};
\node (21c) at (12,2){$C_{p^2q}$};
\draw[->](00a) to (10a);
\draw[->] (00a) to (01a);
\draw[->, bend right] (00a) to (20a);
\draw[->] (10a) to (20a);
\draw[->] (11a) to (21a);

\draw[->] (00b) to (01b);
\draw[->] (10b) to (11b);
\draw[->] (10b) to (21b);
\draw[->] (10b) to (20b);

\draw[->] (00c) to (01c);
\draw[->] (10c) to (20c);
\draw[blue] (-8.6,-0.6)--(-8.6,2.6)--(-7.4,2.6)--(-7.4,0.6)--(-3.6,0.6)--(-3.6,-0.6)--(-8.6,-0.6);
\draw[blue] (-6.6,2.6)--(-3.6,2.6)--(-3.6,1.4)--(-6.6,1.4)--(-6.6,2.6);

\draw[blue] (-0.6,-0.6)--(-0.6,2.6)--(0.6,2.6)--(0.6,-0.6)--(-0.6,-0.6);
\draw[blue] (1.4,-0.6)--(1.4,2.6)--(4.6,2.6)--(4.6,-0.6)--(1.4,-0.6);
\draw[blue] (7.4,-0.6)--(7.4,2.6)--(8.6,2.6)--(8.6,-0.6)--(7.4,-0.6);
\draw[blue] (9.4,-0.6)--(9.4,0.6)--(12.6,0.6)--(12.6,-0.6)--(9.4,-0.6);
\draw[blue] (9.4,1.4)--(9.4,2.6)--(10.6,2.6)--(10.6,1.4)--(9.4,1.4);
\draw[blue] (11.4,1.4)--(11.4,2.6)--(12.6,2.6)--(12.6,1.4)--(11.4,1.4);
\end{tikzpicture}}}
    \caption{The connected components of three $C_{p^2q}$-transfer systems.}
    \label{fig:ConnCompsEx}
\end{figure}
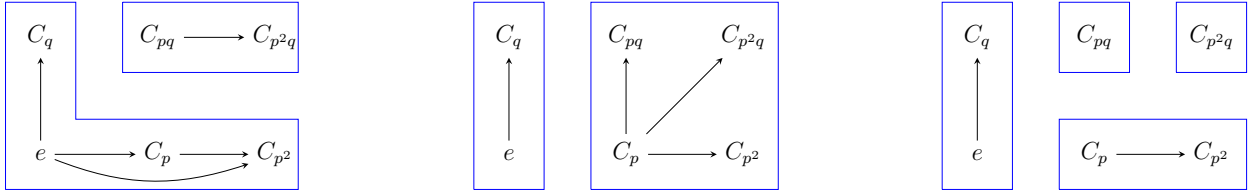

 Our results on compatible pairs rely heavily on the concept of \emph{saturation}, which is a special property of $G$-transfer systems
that was introduced by Rubin in \cite{RubinDetectingOperads}.  
 The \emph{saturation conjecture} stated in \cite{RubinDetectingOperads} provides a bridge between certain combinatorial properties of transfer systems and linear isometries operads. The saturation conjecture was proved for $C_{pq^s}$ in \cite{SaturatedWithUndergrads}, for $\C$ in \cite{Bannwart2023RealizationOS} and for cyclic and rank 2 groups in \cite{macbrough2023equivariant}. In addition, \cite{SaturatedWithUndergrads} enumerates the saturated transfer systems for $\C$.

\begin{definition}
A $G$-transfer system $T$ is \emph{saturated} if it satisfies the following ``two out of three" condition: if $L\leq K\leq H\leq G$, and $T$ contains two of the three edges $L\to K$, $L\to H$, $K\to H$, then $T$ contains the third as well. 
\end{definition}

By transitivity, if $T$ contains $L\to K$ and $K\to H$, then it must contain $L \to H$, and by restriction, if $T$ contains $L\to H$ then it must contain $L \to K$. Hence, we can rephrase the ``two out of three" condition by saying that if $L\leq K\leq H\leq G$ and $T$ contains $L\to H$, then $T$ must contain $K\to H$ as well. Colloquially, the ``two out of three condition" requires that $T$ includes all ``short" edges that sit inside ``long" edges.  See \Cref{fig:saturated}. 

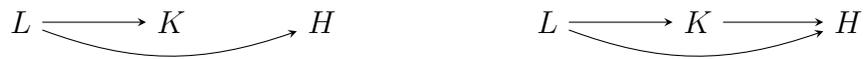
\begin{figure}[hbt!]
\centerline{
\begin{tikzpicture}[>=stealth, bend angle=20, baseline=(current bounding box.center)]
    \node (L1) at (0,0) {$L$};
    \node (K1) at (2,0) {$K$};
    \node (H1) at (4,0) {$H$};
    \node (L2) at (7,0) {$L$};
    \node (K2) at (9,0) {$K$};
    \node (H2) at (11,0) {$H$};
    \draw[->] (L1) to (K1);
    \draw[->, bend right] (L1) to (H1);

    \draw[->] (L2) to (K2);
    \draw[->, bend right] (L2) to (H2);
    \draw[->] (K2) to (H2);
\end{tikzpicture}}
\caption{The transfer system on the left is not saturated, while the transfer system on the right is.}
\label{fig:saturated}
\end{figure}

\begin{example}
    For any group $G$, the complete transfer system $T_c$ is saturated.
\end{example}

\begin{example}\label{secondexample}
    The  $C_{p^2q}$-transfer system shown below  is saturated.

    \centering
\begin{tikzpicture}[>=stealth, bend angle=20, baseline=(current bounding box.center)]
    \node (Cq) at (0,0) {$C_q$};
    \node (Cpq) at (2,0) {$C_{pq}$};
    \node (Cp2q) at (4,0) {$C_{p^2q}$};
    \node (e) at (0,-2) {$e$};
    \node (Cp) at (2,-2) {$C_p$};
    \node (Cp2) at (4,-2) {$C_{p^2}$};

    \draw[->] (e) to (Cq);
    \draw[->] (e) to (Cpq);
    \draw[->] (e) to (Cp);
    \draw[->, overlay] (Cp) to (Cpq);
    \draw[->] (Cp2) to (Cp2q);
    \draw[->] (Cq) -- (Cpq);

    \draw[white] (-0.6,-2.6)--(4.6,-2.6)--(4.6,0.6)--(-0.6,0.6)--(-0.6,-2.6);
\end{tikzpicture}
   
\end{example}

The transfer system in \cref{firstexample} is not saturated. Concretely, when we let $L=e$, $K=C_{p^2}$ and $H=C_{p^2q}$, the edge $K\to H$ is missing. Nevertheless, we can think about adding the minimum number of edges needed to make a transfer system saturated. This inspires the following definition.

\begin{definition}[Saturated Hull, $\Hull(T)$]\label{Def:SatHull}
    Let $T$ be a $G$-transfer system. The \emph{saturated hull} of $T$, $\Hull(T)$, is the smallest saturated $G$-transfer system that contains $T$.
\end{definition}

Note that the saturated hull of a transfer system $T$ always exists, and we can  construct it explicitly by taking the intersection of all  saturated transfer systems containing $T$.

\begin{example}
    The saturated hull of the transfer system in \cref{firstexample} is the complete transfer system shown in \Cref{ex:CompleteTS}. 
    \end{example}

\begin{example}\label{ex:SatHullEx}    
     \Cref{secondexample} is the saturated hull of the transfer system shown below.
    \[
    \begin{tikzpicture}[>=stealth, bend angle=20, baseline=(current bounding box.center)]
    \node (Cq) at (0,0) {$C_q$};
    \node (Cpq) at (2,0) {$C_{pq}$};
    \node (Cp2q) at (4,0) {$C_{p^2q}$};
    \node (e) at (0,-2) {$e$};
    \node (Cp) at (2,-2) {$C_p$};
    \node (Cp2) at (4,-2) {$C_{p^2}$};

    \draw[->] (e) to (Cq);
    \draw[->] (e) to (Cpq);
    \draw[->] (e) to (Cp);
    \draw[->] (Cp) to (Cpq);
    \draw[->] (Cp2) to (Cp2q);

    \draw[white] (-0.6,-2.6)--(4.6,-2.6)--(4.6,0.6)--(-0.6,0.6)--(-0.6,-2.6);
\end{tikzpicture}
    \]
\end{example} 

There is a nice relationship between the connected components of a transfer system and the connected components of its saturated hull. In addition to playing a crucial role in the results of \Cref{sec:LSPOnGrid}, this relationship tells us exactly when the saturated hull of a transfer system is the complete transfer system. We state the relationship in \Cref{fact:HullCompsComplete}, the proof of which requires the following lemma. (We  also use this lemma in the proof of \Cref{lem:SmallestIsSmallest}.)

\begin{lemma}\label{lem:zigzag}
Let $T$ be a $G$-transfer system. If two vertices are in the same connected component of $T$, then there is an undirected path of at most length two between them. 
\end{lemma}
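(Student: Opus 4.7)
The plan is to prove a slightly stronger statement by induction on the length $n$ of an undirected path between two vertices: for any two vertices $H$ and $K$ in the same connected component of $T$, there is a subgroup $M$ with $M \to H$ and $M \to K$. This immediately implies the lemma, since the zigzag $H \leftarrow M \to K$ is an undirected path of length at most two between $H$ and $K$. I strengthen the conclusion in this way (with the ``apex'' sitting below both endpoints) because the restriction axiom naturally produces new edges whose targets are pre-existing subgroups, which is what makes the induction close cleanly.

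The base case $n = 0$ is trivial, taking $M = H = K$. For the inductive step, let $H = H_0, H_1, \ldots, H_n = K$ be the vertices of an undirected path of length $n \geq 1$. Applying the inductive hypothesis to the shorter path $H_0, \ldots, H_{n-1}$ produces a subgroup $M'$ with $M' \to H$ and $M' \to H_{n-1}$. Now I split on the direction of the final edge between $H_{n-1}$ and $K$. If $H_{n-1} \to K$, then transitivity applied to $M' \to H_{n-1} \to K$ yields $M' \to K$, so $M := M'$ satisfies the conclusion. If instead $K \to H_{n-1}$, I set $M := M' \cap K$; then both $K \leq H_{n-1}$ and $M' \leq H_{n-1}$, so restriction of $M' \to H_{n-1}$ with $L = K$ produces $M \to H_{n-1} \cap K = K$, while restriction of $K \to H_{n-1}$ with $L = M'$ produces $M \to H_{n-1} \cap M' = M'$; combining the latter with $M' \to H$ via transitivity then gives $M \to H$.

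The only real subtlety will be noticing that the two intersections $H_{n-1} \cap K$ and $H_{n-1} \cap M'$ collapse to $K$ and $M'$ respectively, which is exactly what allows the restriction axiom to supply the edges needed to propagate the induction. Beyond this bookkeeping there is no genuine obstacle; the argument uses only transitivity and restriction, so it is valid for any finite group $G$ and does not require any structural features of $C_{p^r q^s}$.
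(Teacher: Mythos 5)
Your proof is correct and is essentially the paper's argument: the key move in your second case --- using restriction to replace the wedge $M' \to H_{n-1} \leftarrow K$ by $M' \leftarrow M' \cap K \to K$ and then composing same-direction arrows via transitivity --- is exactly the paper's replacement of $A \to C \leftarrow B$ by $A \leftarrow A \cap B \to B$ followed by composition. You have merely repackaged the induction, carrying the explicit invariant that a single apex $M$ admits edges $M \to H$ and $M \to K$, which is the same wedge-down form $H \leftarrow M \to K$ the paper's iterative rewriting produces (and later uses in Proposition \ref{fact:HullCompsComplete}).
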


\begin{proof}
Let $A$ and $B$ be vertices in the same connected component of $T$. First, if $T$ contains an undirected path of length two of the form $$A \rightarrow C \leftarrow B,$$ then by restriction, 
$T$ contains the edges $A \cap B \rightarrow A$ and $A \cap B \rightarrow B$. 
This means that $T$ now also has an undirected path from $A$ to $B$ with the direction of the arrows going in the opposite direction, i.e., $T$ contains the path
\[
A \leftarrow A \cap B \rightarrow B.
\]
By definition, this path still lies  in the same connected component as $A$ and $B$.

Now, assume that $T$ contains  an undirected path
\[
A \leftarrow A_1 \rightarrow A_2 \leftarrow A_3 \rightarrow B.
\]
(Note that arrows can also be the identity, so it does not matter if this path starts or ends with a right or a left arrow.) If we apply the above method to the middle part
\[
A_1 \rightarrow A_2 \leftarrow A_3,
\]
we get
\[
A_1 \leftarrow A_1 \cap A_3 \rightarrow A_3,
\]
and therefore the undirected path becomes
\[
A \leftarrow A_1 \leftarrow A_1 \cap A_3 \rightarrow A_3 \rightarrow B.
\]
Composing the arrows going in the same direction yields a path of length two. Applying this method inductively to an arbitrary undirected path in $T$ gives the desired result.\end{proof}

 \begin{proposition}\label{fact:HullCompsComplete} Let $T$ be a $G$-transfer system.  Two vertices are in the same connected component of $T$ if and only if they are in the same connected component of $\Hull(T)$. Moreover, if $K \le H$ and $K$ and $H$ are in the same connected component of $T$ then $\Hull(T)$ contains $K \to H$.

 \end{proposition}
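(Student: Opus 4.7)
The plan is to prove both halves of the proposition simultaneously by identifying $\Hull(T)$ explicitly. Define a relation $T'$ on subgroups of $G$ by setting $K \to H$ in $T'$ exactly when $K \le H$ and $K, H$ lie in the same connected component of $T$. By construction $T \subseteq T'$, and every edge of $T'$ connects vertices already in a common component of $T$, so the connected components of $T$ and $T'$ agree. If I can verify that $T'$ is a saturated $G$-transfer system, then minimality of the saturated hull gives $\Hull(T) \subseteq T'$, and the sandwich $T \subseteq \Hull(T) \subseteq T'$ immediately forces all three to have the same connected components, proving the iff.

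Verifying that $T'$ is a transfer system is mostly routine: reflexivity and the subgroup condition are built in; transitivity combines transitivity of $\le$ with the fact that ``same connected component'' is an equivalence relation; conjugation works because conjugation by any $g \in G$ is a graph automorphism of $T$ and hence permutes its components. The first nontrivial axiom is restriction, where \cref{lem:zigzag} enters. Given $K \to H$ in $T'$ and $M \le G$, a length-at-most-two path $K \leftarrow A \to H$ or $K \to B \leftarrow H$ in $T$ restricts arrow-by-arrow via axiom (4) of $T$ to a length-two path between $K \cap M$ and $H \cap M$, placing them in a common component of $T$.

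Saturation of $T'$ uses the same ingredients. Suppose $L \le K \le H$ with $L \to H \in T'$, and pick a zigzag in $T$ from $L$ to $H$. If it has shape $L \leftarrow A \to H$, then $A \le L \le K$, and restricting $A \to H$ by $K$ produces $A \to K$ in $T$, inserting $K$ into the component. If instead it has shape $L \to B \leftarrow H$, then $K \le H \le B$, and restricting $L \to B$ by $K$ produces $L \to K$ in $T$, again inserting $K$. Either way $K \to H \in T'$, so $T'$ is saturated.

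Finally, the moreover requires the reverse inclusion $T' \subseteq \Hull(T)$. Given $K \le H$ in a common component of $T$, they share a component of $\Hull(T)$, so \cref{lem:zigzag} applied inside $\Hull(T)$ produces a length-at-most-two path between them. In the shape $K \leftarrow C \to H$ the triple $C \le K \le H$ carries the edges $C \to K$ and $C \to H$ in $\Hull(T)$, and the two-out-of-three condition supplies $K \to H$; in the shape $K \to C \leftarrow H$ the triple $K \le H \le C$ carries $K \to C$ and $H \to C$ in $\Hull(T)$, and again two-out-of-three supplies $K \to H$. The main obstacle throughout is bookkeeping—choosing the right triple and correctly identifying the roles of $L$, $K$, $H$ in each application of saturation or restriction—but once the two zigzag shapes are handled in parallel, every case collapses to a single axiom invocation.
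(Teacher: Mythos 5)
Your proof is correct, but it follows a genuinely different route from the paper's. The paper argues in two short stages: first, that the saturation closure only ever adds an edge $K \to H$ when $T$ already contains $L \to H$ and (by restriction) $L \to K$ for some $L \le K \le H$, so no new connections between components can appear; second, for the ``moreover'' clause, it takes a length-two path in $T$ itself, which by \cref{lem:zigzag} may be put in the valley form $K \leftarrow L \to H$, and applies saturation of $\Hull(T)$ exactly once. You instead identify the hull explicitly: you define $T'$ by $K \to H$ if and only if $K \le H$ and $K$, $H$ lie in one component of $T$, verify all the transfer-system axioms plus saturation for $T'$ (your zigzag bookkeeping is right in each case --- e.g.\ in the valley shape $A \le L \le K$, restricting $A \to H$ along $K$ indeed yields $A \to K$, which is the one step that does not follow formally from the definition of $T'$), and then squeeze $T \subseteq \Hull(T) \subseteq T'$ by minimality, obtaining the reverse inclusion $T' \subseteq \Hull(T)$ from two applications of the two-out-of-three condition inside $\Hull(T)$, which is legitimate since $\Hull(T)$ is itself a transfer system to which \cref{lem:zigzag} applies. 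The net effect is that your argument establishes the equality $\Hull(T) = T'$ as its organizing principle, giving an explicit closed-form description of the hull (all comparable pairs within a component of $T$), whereas the paper extracts the same consequence only after the fact; your version also sidesteps the paper's slightly informal ``form the hull by adding edges'' step, at the cost of checking five axioms and two zigzag shapes where the paper's proof needs a single saturation application.
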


 \begin{proof}  First, if two subgroups are in the same connected component of a $G$-transfer system $T$, then they are in the same connected component of $\Hull(T)$ because $\Hull(T)$ contains $T$.

 Conversely, given a $G$-transfer system $T$, we form $\Hull(T)$ by adding the edge $K \to H$ whenever $L \le K \le H$ and $T$ contains $L \to H$. Since $T$ already contains $L\to H$ and $L \to K$ (by restriction), adding $K\to H$ does not create a new connection between vertices. Hence, if two vertices are not in the same component of $T$, then they will not be in the same component of $\Hull(T)$. 

Finally, if $K\leq H$ and $K$ and $H$ are in the same component of $T$, there exists an undirected path of edges in $T$ connecting them both. 
By \Cref{lem:zigzag}, we can assume this path to be of the form $K \leftarrow L \rightarrow H$ for some $L \le K \le H$. In that case, the saturation condition implies precisely that $K\to H$ is in $\Hull(T)$. \end{proof}

\Cref{fact:HullCompsComplete} tells us that a transfer system and its saturated hull have the same number of connected components. It also tells us that the connected components of the hull are ``complete'' in the following sense: if $K \leq H$ are two subgroups in the same connected component of $T$, then there is an edge $K \rightarrow H$ in $\Hull(T)$. In particular, if a transfer system is connected, then so is its saturated hull. This leads to the following corollary.

\begin{corollary}\label{cor:connected}
Let $T$ be a $G$-transfer system. Then $\Hull(T)=T_c$ if and only if $T$ is connected. 
\end{corollary}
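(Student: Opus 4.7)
The plan is to derive this corollary directly from \Cref{fact:HullCompsComplete}, which already does most of the heavy lifting. The two directions are essentially immediate once one unpacks what connectedness of $T$ and equality $\Hull(T) = T_c$ mean.

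For the forward direction, I would assume $\Hull(T) = T_c$. Since $T_c$ contains the edge $e \to H$ for every subgroup $H \leq G$, every vertex of $T_c$ lies in a single connected component; that is, $\Hull(T)$ is connected. By the first statement of \Cref{fact:HullCompsComplete}, $T$ and $\Hull(T)$ have the same connected components, so $T$ is connected as well.

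For the reverse direction, suppose $T$ is connected. Given any pair of subgroups $K \leq H$ of $G$, they lie in the single connected component of $T$ by assumption. The second statement of \Cref{fact:HullCompsComplete} then guarantees that $\Hull(T)$ contains the edge $K \to H$. Since $K \leq H$ was arbitrary, $\Hull(T)$ contains every subgroup-relation edge, so $\Hull(T) = T_c$.

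Neither direction should present a genuine obstacle, since the combinatorial content has been packaged into \Cref{fact:HullCompsComplete}; the only point worth being careful about is the observation that $T_c$ is connected (witnessed, for instance, by the spanning edges $e \to H$), which is what lets us transport the connectedness hypothesis across the equality $\Hull(T) = T_c$.
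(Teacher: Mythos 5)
Your proof is correct and follows essentially the same route as the paper, which deduces the corollary from \Cref{fact:HullCompsComplete} in exactly this way: the reverse direction via the second statement (same component plus $K \leq H$ forces $K \to H$ in $\Hull(T)$), and the forward direction via the agreement of connected components together with the connectedness of $T_c$. Your explicit note that $T_c$ is connected via the spanning edges $e \to H$ is a detail the paper leaves implicit, but there is no substantive difference.
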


 \begin{example} Above we saw that the transfer system in \Cref{secondexample} is the saturated hull of the transfer system in \Cref{ex:SatHullEx}. Notice that both transfer systems have the same connected components, and each component of the saturated hull is complete.
 \end{example}

We devote the remainder of this section to $\C$-transfer systems, as these are the focus of our main results in \Cref{sec:LSPOnGrid}. We  display a $\C$-transfer system on a grid; we let the bottom left vertex of the grid   represent the trivial subgroup. Then moving to the right increases the power of $p$ and going up increases the power of $q$ so that the top right vertex represents $\C$. Thus, it is natural to use coordinate notation to refer to the vertices of a $\C$-transfer system. For example, the coordinate $(i,j)$ represents the subgroup $C_{p^iq^j}$. See \Cref{fig:grid}.

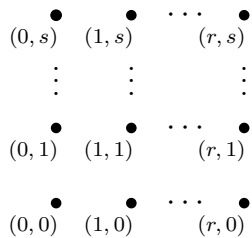
\begin{figure}[hbt!]
\centering
\begin{tikzpicture}
\fill (0,0) circle (2pt);
\fill (1,0) circle (2pt);
\fill (2.5,0) circle (2pt);

\fill (0,1) circle (2pt);
\fill (1,1) circle (2pt);
\fill (2.5,1) circle (2pt);

\filldraw (0,2.5) circle (2pt);
\fill (1,2.5) circle (2pt);
\fill (2.5,2.5) circle (2pt);

\node at (1.75,0) {$\cdots$};
\node at (1.75,1) {$\cdots$};
\node at (1.75,2.5) {$\cdots$};

\node at (0,1.75) {$\vdots$};
\node at (1,1.75) {$\vdots$};
\node at (2.5,1.75) {$\vdots$};

\tiny
\node at (-0.3,-0.3) {$(0,0)$};
\node at (0.7,-0.3) {$(1,0)$};
\node at (2.2,-0.3) {$(r,0)$};
\node at (-0.3,0.7) {$(0,1)$};
\node at (0.7,0.7) {$(1,1)$};
\node at (2.2,0.7) {$(r,1)$};
\node at (-0.3,2.2) {$(0,s)$};
\node at (0.7,2.2) {$(1,s)$};
\node at (2.2,2.2) {$(r,s)$};
\end{tikzpicture}
    \caption{ Visualizing of the subgroups of $\C$ as an $r \times s$ grid.}
    \label{fig:grid}
\end{figure}

Below we restate the definition of a $G$-transfer system specifically for $G=\C$ using the coordinate notation. Since the conjugation condition is trivial for abelian groups,  we omit it below.

\begin{definition}[$\C$-transfer system]\label{def:CtransSys}
    A \textit{$\C$-transfer system} $T$ is a partial order relation $\to$ on the set of vertices of the grid associated to $\C$ that satisfies the following conditions.

    \begin{enumerate}
        \item (Subgroup) If $(i_1,j_1)\to (i_2,j_2)$ then  $i_1\leq i_2$ and $j_1\leq j_2$.
        \item (Reflexivity) $(i,j)\to (i,j)$ for all  $(i,j)$.
        \item (Transitivity) If $(i_1,j_1)\to(i_2,j_2)$ and $(i_2,j_2)\to(i_3,j_3)$ then $(i_1,j_1)\to(i_3,j_3)$.
        \item (Restriction) If $(i_1,j_1)\to (i_2,j_2)$, then for any vertex $(a,b)$ we have  \newline $(\min\{i_1,a\},\min\{j_1,b\})\to (\min\{i_2,a\},\min\{j_2,b\})$.
    \end{enumerate}
\end{definition}

When the group is clear from context we display a $\C$-transfer system without labeling its vertices. For example, the diagram below shows the $C_{p^2q}$-transfer system from \Cref{firstexample}. 

  \centerline{
\begin{tikzpicture}[>=stealth, baseline=(current bounding box.center),  bend angle=20]
\fill (0,0) circle (2pt);
\fill (1,0) circle (2pt);
\fill (2,0) circle (2pt);

\fill (0,1) circle (2pt);
\fill (1,1) circle (2pt);
\fill (2,1) circle (2pt);
\node (00) at (0,0){};
\node (10) at (1,0){};
\node (20) at (2,0){};
\node (01) at (0,1){};
\node (11) at (1,1){};
\node (21) at (2,1){};
\draw[->](00) to (10);
\draw[->] (00) to (11);
\draw[->] (00) to (01);
\draw[->] (00) to (21);
\draw[->, bend right] (00) to (20);
\draw[->] (01) to (11);
\draw[white, line width=4pt] (10) to (11);
\draw[->] (10) to (11);

\draw[white] (-0.3,-0.5)--(2.3,-0.5)--(2.3,1.3)--(-0.3,1.3)--(-0.3,-0.5);
\end{tikzpicture}}

\begin{notation}[The $\ij{i,j}_T$ component of a $\C$-transfer system]\label{not:component}
In \Cref{sec:LSPOnGrid}, given a $\C$-transfer system $T$, we will often want to consider  the connected component of $T$ that contains a specific vertex.   Hence, we denote  the connected component of the vertex $(i,j)$  by $\ij{i,j}_T$ or just $\ij{i,j}$ when the transfer system is clear. 
\end{notation}

\section{Compatible Pairs}\label{sec:pairs}

In this section we provide a formal definition for and  examples of compatible pairs of $G$-transfer systems. Then given a transfer system $T$ we prove that $(T,T_c)$ and $(T,\Hull(T))$ always form compatible pairs. This idea leads to asking when $T$ only forms these trivially compatible pairs, in which case we say that $T$ is \textit{lesser simply paired}. We end this section by examining compatibility specifically for $\C$-transfer systems.

\begin{definition}[{Compatible Pair, \cite[Definition 4.6]{ChanTambara}}]\label{def:CompatTSs}
    Let $T$ and $T'$ be $G$-transfer systems. Then $(T,T')$ is a \textit{compatible pair} if $T$ and $T'$ satisfy the following criteria. 
    \begin{enumerate}
        \item $T \subseteq T'$. 
        \item Suppose $A$, $B$, and $C$ are subgroups of $G$ such that $B$ and $C$ are subgroups of $A$. If $B \to A$ is in $T$ and $B\cap C \to B$ is in $T'$ then $C \to A$ must be in $T'$.
    \end{enumerate}
\end{definition}

The second criterion is best understood by the  diagram in \Cref{fig:CompatDiag}. Note that if $T$ contains the edge $B \to A$, then by restriction, $T$ contains $B\cap C\to C$ as well. Hence, the second criterion states that  if the black vertical edges  of \Cref{fig:CompatDiag} are in $T$ and the red horizontal edge is in $T'$, then $T'$  must contain $C \to A$ as well.

\begin{figure}[hbt!]
    \centerline{
\begin{tikzpicture}
    \node (A) at (1.5,2){$A$};
    \node (B) at (1.5,0){$B$};
    \node (BnC) at (-1.5,0){$B\cap C$};
    \node (C) at (-1.5,2){$C$};
    \draw[->] (BnC) -- (C) node[midway, left] {\tiny in $T$};
    \draw[->, blue, dashed] (C) -- (A) node[midway, above] {\tiny Need in $T'$};
    \draw[->] (B)--(A) node[midway, right] {\tiny in $T$};
    \draw[->, red] (BnC)--(B) node[midway, above] {\tiny in $T'$};
\end{tikzpicture}}
    \caption{Diagram for Criterion 2 of the  definition of compatible pairs (\Cref{def:CompatTSs})}
    \label{fig:CompatDiag}
\end{figure}
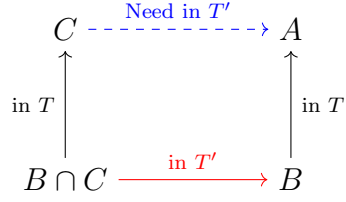

\begin{example}\label{ex:compatTS} The two transfer systems shown below are an example of a compatible pair of $C_{p^2q}$-transfer systems. The second criterion of \Cref{def:CompatTSs} (i.e., \Cref{fig:CompatDiag}) states that since these two are compatible and the transfer system on the right contains the edge $(0,0)\to (2,0)$, then it must contain the edge $(1,0)\to (2,1)$ as well.

\[
\begin{tikzpicture}[>=stealth, baseline=(current bounding box.center), bend angle=20]
\fill (0,0) circle (2pt);
\fill (1,0) circle (2pt);
\fill (2,0) circle (2pt);

\fill (0,1) circle (2pt);
\fill (1,1) circle (2pt);
\fill (2,1) circle (2pt);

\fill (4,0) circle (2pt);
\fill (5,0) circle (2pt);
\fill (6,0) circle (2pt);

\fill (4,1) circle (2pt);
\fill (5,1) circle (2pt);
\fill (6,1) circle (2pt);

\node (00a) at (0,0){};
\node (10a) at (1,0){};
\node (20a) at (2,0){};
\node (01a) at (0,1){};
\node (11a) at (1,1){};
\node (21a) at (2,1){};
\node (00b) at (4,0){};
\node (10b) at (5,0){};
\node (20b) at (6,0){};
\node (01b) at (4,1){};
\node (11b) at (5,1){};
\node (21b) at (6,1){};
\draw[->] (00a) to (01a);
\draw[->] (00a) to (10a);
\draw[->] (00a) to (11a);
\draw[->] (10a) to (11a);
\draw[->] (20a) to (21a);

\draw[->] (00b) to (01b);
\draw[->] (00b) to (10b);
\draw[->, bend right] (00b) to (20b);
\draw[->, bend left] (01b) to (21b);
\draw[->] (01b) to (11b);
\draw[->] (00b) to (11b);
\draw[->] (00b) to (21b);
\draw[->] (20b) to (21b);
\draw[white, line width=4pt] (10b) to (11b);
\draw[->] (10b) to (11b);

\draw[white] (-0.3,-0.5)--(2.3,-0.5)--(2.3,1.3)--(-0.3,1.3)--(-0.3,-0.5);
\end{tikzpicture}
\]
\end{example}

Given a $G$-transfer system $T$, it is natural to ask which $G$-transfer systems that contain $T$ are compatible with $T$, but unless $G$ is $C_{p^n}$, this question is difficult to answer. (The case of $G=C_{p^n}$ is manageable because the second criterion of \Cref{def:CompatTSs} degenerates.)
However, \Cref{prop:SaturTCompatible} gives a class of transfer systems that are always compatible with $T$.

\begin{proposition}\label{prop:SaturTCompatible}
    Let $T$ be a $G$-transfer system. If $T'$ is a saturated $G$-transfer system that contains $T$ then $(T,T')$ is a compatible pair.
\end{proposition}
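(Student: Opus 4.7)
The plan is to verify the two conditions of \Cref{def:CompatTSs} directly. Condition (1), that $T \subseteq T'$, is given by hypothesis, so the entire content of the proof lies in verifying the compatibility condition (2).

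Suppose we are in the setup of condition (2): subgroups $B, C \leq A \leq G$ such that $B \to A$ lies in $T$ and $B \cap C \to B$ lies in $T'$. Since $T \subseteq T'$, the edge $B \to A$ is also in $T'$. I would then invoke transitivity in $T'$ applied to the pair $B \cap C \to B$ and $B \to A$ to conclude that the edge $B \cap C \to A$ lies in $T'$.

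At this stage we have $B \cap C \leq C \leq A$ with the ``long'' edge $B \cap C \to A$ sitting inside $T'$. This is precisely the configuration to which the saturation property of $T'$ applies: using the reformulation noted just after the definition of saturation, namely that a saturated transfer system contains every ``short'' edge $K \to H$ whenever it contains a ``long'' edge $L \to H$ with $L \leq K \leq H$, we conclude that $C \to A$ lies in $T'$. This is exactly what condition (2) demands, so $(T, T')$ is a compatible pair.

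There is no real obstacle here; the argument is a two-line chain of transitivity followed by one application of the saturation axiom. The only thing to be careful about is ensuring the correct nesting $B \cap C \leq C \leq A$ so that the saturation axiom applies in the desired direction, but both inclusions are immediate from the hypotheses.
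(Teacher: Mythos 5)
Your proof is correct and follows essentially the same route as the paper: use transitivity in $T'$ on $B\cap C \to B$ and $B \to A$ to obtain $B \cap C \to A$, then apply saturation along $B \cap C \leq C \leq A$ to get $C \to A$. The only cosmetic difference is that the paper invokes the ``two out of three'' form of saturation together with the restricted edge $B\cap C \to C$, while you use the equivalent rephrased version stated just after the definition, so the arguments coincide.
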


\begin{proof}
    Let $T'$ be a saturated transfer system. To show that $(T,T')$ is a compatible pair we must show that Criterion 2 of \Cref{def:CompatTSs} holds (i.e, \Cref{fig:CompatDiag}). So, let $A$, $B$, and $C$ be subgroups of $G$ such that $B$ and $C$ are subgroups of $A$. Further, assume $B \cap C \to B$ is in $T'$ and $B \to A$ is in $T$. We will argue that $T'$ contains $C \to A$. Since $T'$ contains $T$, it follows that $T'$ contains $B\cap C \to B$, $B \to A$, and $B\cap C \to C$. Then $T'$ contains $B\cap C \to A$ by transitivity. 
     Finally, since $T'$ is saturated and contains $B\cap C \to A $ and $B\cap C \to C$, it follows that $T'$ contains $C \to A$, see picture below.  \end{proof}

    \[
    \begin{tikzpicture}
    \node (A) at (1,1.5){$A$};
    \node (B) at (1,0){$B$};
    \node (BnC) at (-1,0){$B\cap C$};
    \node (C) at (-1,1.5){$C$};
    \node (imp) at (2.5,0.75){$\Rightarrow$};
    \node (A1) at (6,1.5){$A$};
    \node (B1) at (6,0){$B$};
    \node (BnC1) at (4,0){$B\cap C$};
    \node (C1) at (4,1.5){$C$};
    \draw[->] (BnC) -- (C) node[midway, left] {\tiny in $T$};
    \draw[->] (B)--(A) node[midway, right] {\tiny in $T$};
    \draw[->, red] (BnC)--(B) node[midway, above] {\tiny in $T'$};
    \draw[->,red] (BnC1) -- (C1) node[midway, left] {\tiny in $T'$};
    \draw[->,red] (BnC1) -- (B1) node[midway, below] {\tiny in $T'$};
    \draw[->,red] (BnC1) -- (A1) node[midway, left] {\tiny in $T'$};
    \draw[->,red] (B1) -- (A1) node[midway, right] {\tiny in $T'$};
    \draw[->,blue, dashed] (C1) -- (A1) node[midway, above] {\tiny in $T'$};
    \end{tikzpicture}
    \]

\Cref{prop:SaturTCompatible} implies that every $G$-transfer system  is compatible with at least two transfer systems that contain it. 

\begin{corollary}\label{cor:hull}
    Let $T$ be a  $G$-transfer system. Then the following are compatible pairs.
    \begin{itemize} 
    \item $(T,\Hull(T))$ where $\Hull(T)$ is the saturated hull of $T$ (\Cref{Def:SatHull})
    \item $(T,T_c)$ where $T_c$ is the complete transfer system (\Cref{def:CompleteTS})
    \end{itemize}
\end{corollary}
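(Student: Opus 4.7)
The plan is to deduce this corollary as a direct application of \Cref{prop:SaturTCompatible}, which guarantees that $(T, T')$ is a compatible pair whenever $T'$ is a saturated $G$-transfer system containing $T$. So the only thing to check for each of the two bullet points is that the right-hand transfer system is saturated and contains $T$.

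For the first bullet, the saturated hull $\Hull(T)$ is defined in \Cref{Def:SatHull} as the smallest saturated $G$-transfer system containing $T$; in particular it is saturated and contains $T$, so \Cref{prop:SaturTCompatible} immediately yields that $(T, \Hull(T))$ is a compatible pair. For the second bullet, the complete transfer system $T_{c}$ contains \emph{every} $G$-transfer system by definition (since it contains all edges compatible with the subgroup relation), and it is saturated by the example observed just after the definition of saturation (the ``two out of three'' condition holds vacuously because every admissible edge is already present). Hence \Cref{prop:SaturTCompatible} again applies to give that $(T, T_{c})$ is compatible.

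There is essentially no obstacle here; the work has already been done in \Cref{prop:SaturTCompatible}, and the corollary is simply the observation that two canonical saturated over-approximations of $T$, namely $\Hull(T)$ and $T_{c}$, are always available. One might add a brief remark that these two compatible pairs motivate the notion of \emph{trivially compatible} pairs used in the introduction, since they exist for every transfer system $T$ without requiring any further analysis.
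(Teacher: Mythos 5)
Your proof is correct and matches the paper's intended argument exactly: the paper states this as an immediate consequence of \Cref{prop:SaturTCompatible}, relying on precisely the two facts you verify, namely that $\Hull(T)$ is by definition a saturated transfer system containing $T$, and that $T_c$ is saturated and contains every transfer system. Nothing is missing.
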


Since $(T,\Hull(T))$ and $(T,T_c)$ are  compatible pairs for all $T$, we
 consider these  to be the ``trivially" compatible pairs.  Recall from \Cref{cor:connected} that $\Hull(T)=T_c$ if and only if $T$ has exactly one connected component.  Moreover,  the proposition below shows that $\Hull(T)$ is the smallest transfer system that is compatible with $T$.

 \begin{proposition}
 \label{prop:BigTContainsHull}
    Let $T$ and $T'$ be $G$-transfer systems. If $(T,T')$ is a compatible pair, then $T'$ contains $\Hull(T)$. 
\end{proposition}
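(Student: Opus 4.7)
The plan is to show that $T'$ must contain every edge of $\Hull(T)$ by exploiting the characterization of $\Hull(T)$ in terms of connected components, which is readily supplied by \cref{fact:HullCompsComplete} and \cref{lem:zigzag}.

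First, I would use \cref{fact:HullCompsComplete} to describe $\Hull(T)$ explicitly: its edges are exactly the pairs $K \le H$ such that $K$ and $H$ lie in the same connected component of $T$. So proving $\Hull(T) \subseteq T'$ reduces to verifying that whenever $K \le H$ lie in a common connected component of $T$, the edge $K \to H$ is in $T'$.

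Fix such $K \le H$. Since they lie in the same component of $T$, \cref{lem:zigzag} supplies an undirected path of length at most two, which, after the reduction used in the proof of \cref{fact:HullCompsComplete}, can be arranged as $K \leftarrow L \rightarrow H$ for some $L \le K \le H$ with both $L \to K$ and $L \to H$ edges of $T$. Now I would apply Criterion (2) of \cref{def:CompatTSs} with the choices $A = H$, $B = L$, $C = K$. The hypothesis $B \to A$ in $T$ is precisely $L \to H$, which is in $T$. The hypothesis $B \cap C \to B$ in $T'$ becomes $L \cap K \to L$; since $L \le K$ we have $L \cap K = L$, and so this edge is just $L \to L$, which is present by reflexivity. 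Compatibility then forces $C \to A$, i.e.\ $K \to H$, to lie in $T'$, which is exactly what is needed.

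There is really no main obstacle here beyond assembling these ingredients correctly: the crucial observation is that \cref{lem:zigzag} produces a common lower bound $L$ for $K$ and $H$ inside $T$, so that the intersection $L \cap K$ degenerates to $L$ and the reflexivity arrow trivially satisfies the hypothesis of the compatibility condition. After that, the conclusion is immediate from applying Criterion (2) once to each edge of $\Hull(T)$.
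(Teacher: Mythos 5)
Your proof is correct, and its pivotal move is exactly the one the paper uses: the paper's entire proof consists of setting $B = B\cap C$ in the compatibility diagram, so that the hypothesis ``$B\cap C \to B$ in $T'$'' holds by reflexivity and Criterion (2) degenerates into the saturation condition applied to edges of $T$. Your choice $A=H$, $B=L$, $C=K$ with $L \le K$ (so $L \cap K = L$) is precisely this specialization. Where you genuinely differ is in how the containment $\Hull(T) \subseteq T'$ is then extracted. The paper stops after observing that $T'$ saturates the edges of $T$, leaving implicit that these one-pass saturating edges already generate all of $\Hull(T)$ (this relies on the construction of the hull given inside the proof of \cref{fact:HullCompsComplete}). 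You instead invoke the edge characterization of the hull --- $K \to H$ lies in $\Hull(T)$ exactly when $K \le H$ and $K$, $H$ are in the same connected component of $T$, which uses both halves of \cref{fact:HullCompsComplete} --- and then use \cref{lem:zigzag} (together with the restriction trick from the proof of \cref{fact:HullCompsComplete}, needed to convert a wedge $K \to M \leftarrow H$ into one of the form $K \leftarrow L \to H$) to produce the subgroup $L$ feeding the compatibility square. This costs two extra lemmas but buys explicitness: you never need the implicit claim that a single saturation pass over $T$ suffices to build $\Hull(T)$, since each edge of the hull is obtained by one direct application of Criterion (2). There is no circularity, as both lemmas you cite precede the proposition in the paper.
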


\begin{proof}
    We recover the saturation condition from the definition of compatibility by letting $B = B \cap C$ in the diagram of \Cref{fig:CompatDiag}. This implies that $\Hull(T) \subseteq T'$.  
\end{proof}

Given a $G$-transfer system $T$ and a collection $S$ of edges, there exists a smallest transfer system $T'$ containing $S$ such that $(T,T')$ is a compatible pair. Indeed, $T'$ is the intersection of all transfer systems that contain $S$ and form a compatible pair with $T$. This intersection exists because the complete transfer system satisfies these two conditions.  In the following lemma we give an explicit way to construct $T'$ for a particular choice of $S$. This construction  plays a key role in \Cref{sec:LSPOnGrid}, see, in particular, \Cref{rem:HowToShowNotLSP}.

\begin{lemma}\label{lem:smallestcompatible}
    Let $T$ be a $G$-transfer system and let $S$ be a collection of edges with source $e$. Further, let $T'$ be the smallest  $G$-transfer system that contains $S$ and that is compatible with $T$. Then $T'$ can be constructed by completing the following four steps in order. 
    \begin{enumerate}
        \item Take closure of $T\cup S$ under restriction.
        \item Take the closure of the collection created in (1) under transitivity.
        \item Take the closure of the collection created in (2) under conjugation.
        \item  Take the closure of the  collection created in (3) under  compatibility  with respect to $T$.
    \end{enumerate}
\end{lemma}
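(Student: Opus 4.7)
The plan is to prove two containments: that the collection $T'$ produced by steps (1)--(4) is in fact a $G$-transfer system containing $S$ and compatible with $T$, and conversely that any such transfer system contains $T'$.

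Minimality is the easier direction. Let $T''$ be any $G$-transfer system that contains $S$ and is compatible with $T$. Then $T''$ contains $T \cup S$ and is closed under restriction, transitivity, and conjugation, so by a straightforward induction it must contain every edge added in steps (1)--(3). The compatibility of $T''$ with $T$, via \cref{def:CompatTSs}, then forces $T''$ to contain every edge added in step (4). Hence $T' \subseteq T''$.

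The more delicate direction is to verify that the output of step (4) is itself a transfer system. The subgroup and reflexivity conditions are immediate from $T \cup S$, and compatibility of $T'$ with $T$ holds essentially by construction (with step (4) interpreted as iterated until stable). What requires argument is that adding compatibility edges in step (4) does not force further rounds of restriction, transitivity, or conjugation closure. The hypothesis that $S$ has source $e$ is essential here: it implies that after steps (1)--(3) every edge of the collection is either in $T$ or has source $e$. For a newly added edge $C \to A$ witnessed by $B \to A$ in $T$ and $B \cap C \to B$ in the current collection, I would then verify case-by-case that (i) restricting the witness along any $L \leq G$ produces the compatibility triple for $(C \cap L) \to (A \cap L)$; (ii) conjugating the witness by $g \in G$ produces the compatibility triple for $gCg^{-1} \to gAg^{-1}$; (iii) for transitivity, given $D \to C$ in the collection, restricting $D \to C$ by $B$ yields $B \cap D \to B \cap C$, which composes with the witness $B \cap C \to B$ to give $B \cap D \to B$, so compatibility with $B \to A$ in $T$ produces $D \to A$.

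The main obstacle is the transitivity case when both edges involved are themselves compatibility edges, since the argument appears to loop through the compatibility closure multiple times. I would resolve this by inducting on the order in which edges are introduced during the iterative closure of step (4), ensuring that each invocation of transitivity refers only to earlier edges whose closure properties have already been established. Once this bookkeeping is in place, the remaining verifications reduce to routine unpacking of the definitions of restriction, transitivity, conjugation, and compatibility.
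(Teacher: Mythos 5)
Your overall architecture matches the paper's proof: both prove minimality by intersecting with an arbitrary compatible transfer system containing $S$, both use the key observation that after steps (1)--(3) every edge is in $T$ or has source $e$, and both verify closure under restriction and conjugation by restricting or conjugating the witness compatibility diagram (as in \Cref{fig:CompatDiag2}). The minimality direction and cases (i) and (ii) are fine as sketched.

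The gap is in transitivity, which is where the real content of this lemma lives. First, your case (iii) only treats precomposition ($D \to C$ followed by a compatibility edge $C \to A$), and even there it silently invokes the property being proved: the step ``composes with the witness $B \cap C \to B$ to give $B \cap D \to B$'' is an application of transitivity inside the collection under construction, and since $B \cap D \to B \cap C$ may itself be a genuine compatibility edge (not in $T_2$), this composite is not available from transitivity of $T_2$. Second, the postcomposition case --- a compatibility edge $C \to A$ followed by $A \to D$, in particular when $A \to D$ is itself a compatibility edge --- is exactly the hard case, and your proposed fix of ``inducting on the order in which edges are introduced'' does not resolve it: the composites you need involve edges at the same stage as one another, so the ordering gives no traction; what is required is an explicit construction of a new compatibility witness for the composite. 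The paper supplies this in two moves absent from your sketch: (a) a one-pass normal form (\Cref{fig:compcomp}) showing that iterating step (4) adds nothing, so every edge of $T'$ arises from a single compatibility diagram with bottom edge in $T_2$ and vertical edges in $T$, which removes the need for your staged induction entirely; and (b) when composing two normal-form edges (\Cref{fig:comp1}), a case split on whether $A \cap E$ is trivial: if $A \cap E \neq e$, the source-$e$ property forces $A \cap E \to E \in T$, and transitivity of $T$ then gives $B \cap E \to D \in T$, producing a compatibility diagram for $C \to D$ with bottom corners $C \cap B \cap E$ and $B \cap E$ and bottom edge in $T_2$; if $A \cap E = e$, then $C \cap E = e$, so $C \cap E \to E \in T_2$ and $C \to D$ follows from compatibility on that edge directly. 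Some version of this trivial-versus-nontrivial intersection dichotomy (which would also repair the hidden transitivity use in your case (iii)) is unavoidable; it is precisely the point at which the hypothesis on $S$ is used beyond bookkeeping, and ``routine unpacking of the definitions'' does not substitute for it.
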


\begin{proof}

Let $T_0$  be the closure of $T\cup S$ under restriction and let $T_1$ be the closure of $T_0$ under transitivity. Then $T_1$ is closed under restriction, since the restriction of a composition of edges can be written as a composition of their restrictions. 
    
    Now we take the closure of $T_1$ under conjugation and denote it by $T_2$. This is closed under restriction, since the restriction of the conjugate of an edge in $T_1$ is equal to the conjugate of the restriction of that same edge, and $T_1$ is closed under restriction. Similarly, $T_2$ is closed under transitivity since $T_1$ is closed under transitivity and the composition of conjugates of edges in $T_1$ is the conjugate of their composition.  It follows that $T\subset T_2$, and  $T_2$ is the smallest transfer system that contains $T\cup S$. In summary,  $T_2$ is the transfer system that results after doing Step (1), Step (2) and Step (3). Further, every edge in $T_2 \setminus T$ (i.e., every edge added during Steps (1), (2), and (3)) has source $e$.

    Let $T'$ be the closure of $T_2$ under the compatibility condition with respect to $T$, i.e., $T'$ is what we get from applying Step (4) to $T_2$. We will show that $T'$ is closed under restriction, composition and conjugation, and therefore that  $T'$ is  the smallest $G$-transfer system compatible with $T$ that contains $S$, which is our main claim.

    \begin{figure}[hbt!]
    \centerline{
\begin{tikzpicture}
    \node (A) at (1.5,2){$A$};
    \node (B) at (1.5,0){$B$};
    \node (BnC) at (-1.5,0){$B\cap C$};
    \node (C) at (-1.5,2){$C$};
    \draw[->] (BnC) -- (C) node[midway, left] {\tiny in $T$};
    \draw[->, blue, dashed] (C) -- (A) node[midway, above] {\tiny in $T' \backslash T_2$};
    \draw[->] (B)--(A) node[midway, right] {\tiny in $T$};
    \draw[->, red] (BnC)--(B) node[midway, above] {\tiny in $T_2$};
\end{tikzpicture}}
    \caption{The new edges of $T'$ obtained via compatibility.}
    \label{fig:CompatDiag2}
\end{figure}
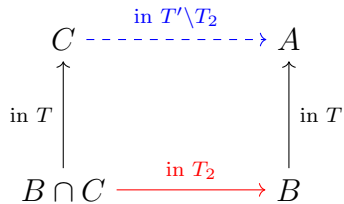

    We first show that all  edges in $T'$ arise from compatibility as  the top horizontal edge in \Cref{fig:CompatDiag2}, 
    where the vertical edges are in $T$ and the bottom horizontal edge is in $T_2$. This includes the edges in $T_2$, since they arise from the trivial compatibility diagrams where the vertical edges are the identity. Since we are considering compatibility with respect to $T$, the compatibility diagrams we consider will have vertical edges in $T$, and we will say that the top horizontal edge is obtained from the compatibility condition on the bottom horizontal edge.

         \begin{figure}[hbt!]
     \definecolor{nicegreen}{HTML}{009B55}
     \[
 \begin{tikzpicture}[>=stealth, baseline=(current bounding box.center), bend angle=20]
 \node (00ab) at (1,-3.5){$B\cap C$};
 \node (20ab) at (4,-3.5){$B$};
 \node (02ab) at (1,-2){$C$};
 \node (22ab) at (4,-2){$A$};
 \node (n02ab) at (1,-5){$B\cap C\cap E$};
 \node (n22ab) at (4,-5){$E$};
 \draw[->] (00ab) -- (02ab);
 \draw[->] (20ab) -- (22ab);
 \draw[->] (n02ab) -- (00ab);

 \draw[->, nicegreen] (n02ab) -- (n22ab) node[midway, below]{\tiny in $T_2$};

 \draw[->, magenta] (02ab) to (22ab);
 \draw[->, magenta] (00ab) -- (20ab);

 \draw[white, line width=4pt] (n22ab) to (20ab);
 \draw[->] (n22ab) -- (20ab);

 \end{tikzpicture}
 \]
         \caption{Compatibility can be done in one step.}
         \label{fig:compcomp}
    \end{figure}
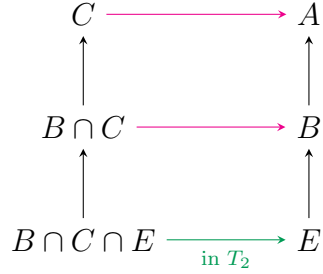
   
    To show that iterating Step (4) does not give us any new edges, consider \Cref{fig:compcomp}. There, the edge $C \to A$ is obtained from the compatibility condition on an  edge that was itself obtained from the compatibility condition on an edge in $T_2$. Since $T$ is closed under transitivity, the edges $B\cap C \cap E \to C$ and $E \to A$ are both in $T$. Given that $C\cap B \cap E = C \cap E$, the outer square in  \Cref{fig:compcomp} is a compatibility diagram, and it follows that $C \to A$ is given directly by the compatibility condition on an edge in $T_2$. Therefore, iterating Step (4) does not give us any new edges, and so to show that $T'$ is a transfer system it is sufficient to only consider edges that are either in $T_2$ or that arise via the compatibility condition as in \Cref{fig:CompatDiag2}.

To show that $T'$ is closed under restriction, let $C\to A $ be an edge in $T'$, and let $D$ be a subgroup of $G$. To show that $T'$ contains $C\cap D \to A \cap D$, we restrict the diagram in \Cref{fig:CompatDiag2} along $D$. Since $T$ and $T_2$ are transfer systems, the vertical edges of the resulting diagram are in $T$ and the bottom horizontal edge is contained in $T_2$. Similarly, we show that $T'$ is closed under conjugation by conjugating the compatibility diagram in \Cref{fig:CompatDiag2} by $g$ for all $g\in G$.

   It remains to show that $T'$ is closed under transitivity.  Given  two edges $C \to A $ and $A \to D $ in $T'$
we consider  \Cref{fig:comp1}.

 \begin{figure}[hbt!]
    \definecolor{nicegreen}{HTML}{009B55}
    \[
\begin{tikzpicture}[>=stealth, baseline=(current bounding box.center), bend angle=17]
\node (00a) at (-3,-1){$C\cap B$};
\node (20a) at (-1,-1){$B$};
\node (02a) at (-3,1){$C$};
\node (22a) at (-1,1){$A$};
\node (00b) at (1,-1){$A\cap E$};
\node (20b) at (3,-1){$E$};
\node (02b) at (1,1){$A$};
\node (22b) at (3,1){$D$};
\node (00ab) at (6,0){$C\cap E$};
\node (20ab) at (9,0){$A\cap E$};
\node (40ab) at (11,0){$E$};
\node (02ab) at (6,1.5){$C\cap A=C$};
\node (22ab) at (9,1.5){$A$};
\node (42ab) at (11,1.5){$D$};
\node (n02ab) at (6,-1.5){$C\cap B\cap E$};
\node (n22ab) at (9,-1.5){$B\cap E$};
\draw[->] (00a) -- (02a) node[midway, left]{\tiny in $T$};
\draw[->] (20a) -- (22a) node[midway, right]{\tiny in $T$};
\draw[->] (00b) -- (02b) node[midway, left]{\tiny in $T$};
\draw[->] (20b) -- (22b) node[midway, right]{\tiny in $T$};
\draw[->] (00ab) -- (02ab);
\draw[->] (20ab) -- (22ab);
\draw[->] (40ab) -- (42ab);
\draw[->] (n02ab) -- (00ab);

\draw[->, nicegreen] (00a) -- (20a) node[midway, below]{\tiny in $T_2$};
\draw[->, nicegreen] (00b) -- (20b)
node[midway, below]{\tiny in $T_2$};
\draw[->, nicegreen] (20ab) to (40ab);
\draw[->, nicegreen] (n02ab) -- (n22ab) node[midway, below]{\tiny in $T_2$};

\draw[->, magenta] (02a) -- (22a)
node[midway, above]{\tiny in $T'$};
\draw[->, magenta] (02b) -- (22b)
node[midway, above]{\tiny in $T'$};
\draw[->, magenta] (02ab) to (22ab);
\draw[->, magenta] (22ab) to (42ab);
\draw[->, magenta] (00ab) -- (20ab) node[midway, above]{\tiny in $T'$};

\draw[->, blue, bend left] (02ab) to (42ab);
\draw[->, magenta, bend right] (00ab) to (40ab);

\draw[white, line width=4pt] (n22ab) to (20ab);
\draw[->] (n22ab) -- (20ab) node[midway, right]{\tiny in $T$};

\end{tikzpicture}
\]
        \caption{Composition of edges in $T'$ }
        \label{fig:comp1}
    \end{figure}
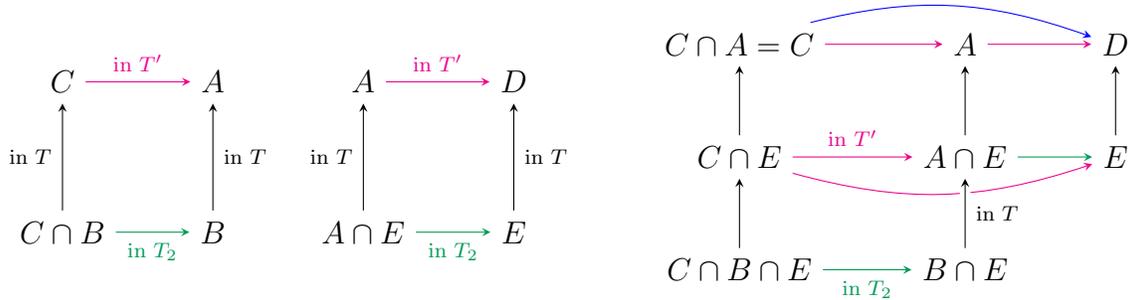

In \Cref{fig:comp1}, let $C \to A$ and $A \to D$ be obtained in $T'$ by the compatibility condition on the edges $C\cap B \to B$ and $A\cap E \to E$ in $T_2$, respectively. Then consider the diagram on the right of the figure, where the bottom square is obtained by restricting the compatibility diagram for $C\to A$ along $E$ and the edge $C\cap E \to C$ is obtained by restricting $A\cap E \to A$ along $C$. If $A \cap E$ is not the trivial subgroup, then $A\cap E \to E$ must be in $T$, since all the edges in $T_2\setminus T$ have the trivial group as their source. By transitivity of $T$ we then have that $B\cap E \to D$ is in $T$, which gives a compatibility diagram for $C \to D$ with bottom corners $C\cap B \cap E$ and $B\cap E$. If instead  $A \cap E$ is the trivial subgroup, then so is $C\cap E$, and in that case $C \cap E \to E$ is in $T_2$, and we obtain $C \to D$ by the compatibility condition on $C \cap E \to E$. In either case, it follows that $C\to D$ it in $T'$, as desired. 

Therefore, $T'$ is a $G$-transfer system and is the smallest $G$-transfer system compatible with $T$ that contains $S$.

\end{proof}

Now that we established a method how to construct compatible pairs in general,
let us return to the more concrete study of compatible pairs of $\C$-transfer systems. A special case of this is $C_{p^n}$, which was examined by Hill, Meng, and Li in \cite{Hill-Meng-Nan}.
They show that since the lattice is totally ordered,  the compatibility condition is equivalent to the saturation condition. Hence, if $T$ and $T'$ are $C_{p^n}$-transfer systesm with $T'$ containing $\Hull(T)$, then $(T,T')$ forms a compatible pair. In other words, when $G=C_{p^n}$, the converse of \Cref{prop:BigTContainsHull} holds. 
However, the converse does not hold in general as we demonstrate in the following example. 
\begin{example}
The diagrams below show a pair of $C_{pq}$-transfer systems $T\subset T'$ such that $\Hull{(T)}=T$ (and thus $T'$ contains $\Hull(T)$) but $(T,T')$ is not a compatible pair. To see  that the compatibility condition fails, let  $A=(1,1)$, $B=(0,1)$ and $C=(1,0)$. 
\[
\begin{tikzpicture}[>=stealth, baseline=(current bounding box.center), bend angle=20]
\fill (0,0) circle (2pt);
\fill (1,0) circle (2pt);

\fill (0,1) circle (2pt);
\fill (1,1) circle (2pt);

\fill (4,0) circle (2pt);
\fill (5,0) circle (2pt);

\fill (4,1) circle (2pt);
\fill (5,1) circle (2pt);

\node (00a) at (0,0){};
\node (10a) at (1,0){};
\node (01a) at (0,1){};
\node (11a) at (1,1){};

\node (00b) at (4,0){};
\node (10b) at (5,0){};

\node (01b) at (4,1){};
\node (11b) at (5,1){};

\draw[->] (00a) to (10a);
\draw[->] (01a) to (11a);

\draw[->] (00b) to (01b);
\draw[->] (01b) to (11b);
\draw[->] (00b) to (10b);
\draw[->] (00b) to (11b);

\draw[white] (-0.3,-0.5)--(2.3,-0.5)--(2.3,1.3)--(-0.3,1.3)--(-0.3,-0.5);
\end{tikzpicture}
\]
\end{example}

\begin{remark} \label{remark:HML}
In \cite{Hill-Meng-Nan}, Hill, Meng and Li  went one step further for $C_{p^n}$-transfer systems, computing the number of compatible pairs of transfer systems on $C_{p^n}$ to be the Fuss-Catalan number

$$A_{n+1}(3,1)=\frac{1}{3n+1}\binom{3n+1}{n}.$$ 

A vital ingredient for this enumeration is the notion of the \emph{core} of a $C_{p^n}$-transfer system $T$, which is defined as the sub-transfer system of $T$ generated by all of the edges of $T$ of the form $i \rightarrow i+1$. Hill, Meng, and Li show that the core is the largest sub-transfer system of $T$ that can be expressed as a disjoint union of complete transfer systems. Proposition 3.2 of \cite{Hill-Meng-Nan} then states that a pair of $C_{p^n}$-transfer systems $T \subseteq T'$ is compatible if and only if $\Hull(T) \subseteq \text{Core}(T').$  This idea forms the basis for a recursion formula leading to their enumeration result for $C_{p^n}$. Unfortunately, this classification cannot be extended to groups other than $C_{p^n}$ (i.e, $\C$) because there is no good definition of ``core" for more complicated groups.
\end{remark}

  In this paper, we focus on identifying $\C$-transfer systems that form only trivially compatible pairs in the sense of the following definition.

\begin{definition}[Lesser Simply Paired]
We say that a $G$-transfer system $T$ is \textit{lesser simply paired }   if, for all $T'$ such that $T\subseteq T'$, $(T,T')$ is a compatible pair if and only if $T'$ is $\Hull(T)$ or the complete transfer system $T_c$.
\end{definition}

\begin{example}\label{ex:todo} The  $C_{p^2q}$-transfer system on the left below is lesser simply paired. Below we provide an easy way of deducing this.  The  transfer system on the right is not lesser simply paired. Indeed, in \Cref{ex:compatTS} we showed that it is compatible with a transfer system that is neither its saturated hull nor the complete transfer system.

\begin{center}
\[
\begin{tikzpicture}[>=stealth, baseline=(current bounding box.center),  bend angle=20]
\fill (0,0) circle (2pt);
\fill (1,0) circle (2pt);
\fill (2,0) circle (2pt);

\fill (0,1) circle (2pt);
\fill (1,1) circle (2pt);
\fill (2,1) circle (2pt);
\node (00) at (0,0){};
\node (10) at (1,0){};
\node (20) at (2,0){};
\node (01) at (0,1){};
\node (11) at (1,1){};
\node (21) at (2,1){};
\draw[->](00) to (10);
\draw[->] (00) to (11);
\draw[->] (00) to (01);
\draw[->] (00) to (21);
\draw[->, bend right] (00) to (20);
\draw[->] (01) to (11);
\draw[white, line width=4pt] (10) to (11);
\draw[->] (10) to (11);

\fill (5,0) circle (2pt);
\fill (6,0) circle (2pt);
\fill (7,0) circle (2pt);

\fill (5,1) circle (2pt);
\fill (6,1) circle (2pt);
\fill (7,1) circle (2pt);

\node (00a) at (5,0){};
\node (10a) at (6,0){};
\node (20a) at (7,0){};
\node (01a) at (5,1){};
\node (11a) at (6,1){};
\node (21a) at (7,1){};

\draw[->] (00a) to (01a);
\draw[->] (00a) to (10a);
\draw[->] (00a) to (11a);
\draw[->] (10a) to (11a);
\draw[->] (20a) to (21a);

\draw[white] (-0.3,-0.5)--(2.3,-0.5)--(2.3,1.3)--(-0.3,1.3)--(-0.3,-0.5);
\end{tikzpicture}
\]
\end{center}

\end{example}

Combining \Cref{cor:connected} with \Cref{prop:BigTContainsHull} immediately gives us a class of lesser simply paired transfer systems.

\begin{theorem}\label{thm:OneCompLSP}
    Let $T$ be a $G$-transfer system. If $T$ has exactly one connected component, then $T$ is lesser simply paired. 
\end{theorem}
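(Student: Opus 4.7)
The proof I would write is essentially a one-line corollary of the two results cited immediately before the theorem statement, so my plan is just to assemble them cleanly.

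First I would observe that the hypothesis and \Cref{cor:connected} combine to give $\Hull(T) = T_c$ at once: connectedness of $T$ forces its saturated hull to be the complete transfer system. This already collapses the two ``trivially compatible'' partners of $T$ into a single transfer system.

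Next I would appeal to \Cref{prop:BigTContainsHull}: any $T'$ forming a compatible pair with $T$ must contain $\Hull(T)$. Substituting $\Hull(T) = T_c$, any such $T'$ satisfies $T_c \subseteq T'$. Since $T_c$ already contains every admissible edge between subgroups of $G$ and every transfer system is by definition a sub-relation of the full subgroup lattice, this forces $T' = T_c$.

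Hence the set of transfer systems compatible with $T$ consists of the single element $T_c = \Hull(T)$, so in particular every compatible $T'$ is equal to either $\Hull(T)$ or $T_c$. This is exactly the defining condition for $T$ to be lesser simply paired, completing the proof. There is no genuine obstacle here; the content of the theorem lies entirely in \Cref{cor:connected} and \Cref{prop:BigTContainsHull}, and the argument is just the observation that when the minimal and maximal compatible partners coincide, any intermediate partner is pinned down.
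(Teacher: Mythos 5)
Your proposal is correct and follows exactly the paper's own argument: apply \Cref{cor:connected} to identify $\Hull(T)=T_c$, then invoke \Cref{prop:BigTContainsHull} to force any compatible $T'$ to contain (hence equal) $T_c$, with compatibility of $(T,T_c)$ itself guaranteed by \Cref{cor:hull}. There is nothing to add or repair.
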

\begin{proof}
By \Cref{cor:connected}, since $T$ has only one component it follows that $\Hull(T)$ is the complete transfer system. Since \Cref{prop:BigTContainsHull} shows that if $(T,T')$ is a compatible pair then $T'$ contains $\Hull(T)$, it follows that $T$ is lesser simply paired.
\end{proof}

 In the remainder of this paper, we rely heavily on understanding the definition of compatibility specifically for $\C$-transfer systems. So, we end this section with a review of the definition in the context of the $(r\times s)$-grid.

\begin{remark}[Compatibility of $\C$-Transfer Systems]\label{rem:CpqCompDiags} Given two $\C$-transfer systems $T$ and $T'$ with $T$ contained in $T'$, to determine if $(T,T')$ is a compatible pair, by \Cref{prop:BigTContainsHull}, we first check if $T'$ contains $\Hull(T)$. Then $T$ and $T'$ must satisfy the  compatibility diagram shown in \Cref{fig:CompatDiag} for any subgroups $A$, $B$, and $C$ such that $B$ and $C$ are subgroups of $A$.

When $B = C$ the diagram is trivial. When $B \ne C$ there are three cases to consider.
\begin{itemize}

\item \textbf{When $C<B$:} Since $B \cap C = C$ and $T'$ contains $T$, by the transitivity condition, $T'$ contains the blue dashed edge $C \to A$ as well. Hence, the compatibility diagram follows trivially from the definition of transfer system.

\item \textbf{When $B < C$:} 
In this case, $B \cap C =B$, so the above square translates to the following: if edges $B \rightarrow C$ and $B \rightarrow A$ are both in $T$, then $C \rightarrow A$ has to be in $T'$. This  spells out  that $T'$ must contain $\Hull(T)$, which we already covered in \Cref{prop:BigTContainsHull}.

\item \textbf{When $B$ and $C$ are not comparable:} This is the only case where the  definition of compatibility gives conditions beyond \Cref{prop:BigTContainsHull}. Therefore, when we use the definition of compatibility in future proofs, we may assume $B$ and $C$ are not comparable. Further, when $B$ and $C$ are not comparable they lie in different rows/columns in the subgroup lattice of $\C$, hence in future proofs we need only consider compatibility diagrams of the forms shown in \Cref{fig:CpqCompatDiags}.

\begin{figure}[hbt!]
    \centering
   \begin{tikzpicture}[>=stealth, baseline=(current bounding box.center), bend angle=20]
\fill (0,0) circle (2pt);
\fill (1,0) circle (2pt);
\fill (2,0) circle (2pt);
\fill (3,0) circle (2pt);
\fill (4,0) circle (2pt);

\fill (0,1) circle (2pt);
\fill (1,1) circle (2pt);
\fill (2,1) circle (2pt);
\fill (3,1) circle (2pt);
\fill (4,1) circle (2pt);

\fill (0,2) circle (2pt);
\fill (1,2) circle (2pt);
\fill (2,2) circle (2pt);
\fill (3,2) circle (2pt);
\fill (4,2) circle (2pt);

\fill (0,3) circle (2pt);
\fill (1,3) circle (2pt);
\fill (2,3) circle (2pt);
\fill (3,3) circle (2pt);
\fill (4,3) circle (2pt);

\fill (6,0) circle (2pt);
\fill (7,0) circle (2pt);
\fill (8,0) circle (2pt);
\fill (9,0) circle (2pt);
\fill (10,0) circle (2pt);

\fill (6,1) circle (2pt);
\fill (7,1) circle (2pt);
\fill (8,1) circle (2pt);
\fill (9,1) circle (2pt);
\fill (10,1) circle (2pt);

\fill (6,2) circle (2pt);
\fill (7,2) circle (2pt);
\fill (8,2) circle (2pt);
\fill (9,2) circle (2pt);
\fill (10,2) circle (2pt);

\fill (6,3) circle (2pt);
\fill (7,3) circle (2pt);
\fill (8,3) circle (2pt);
\fill (9,3) circle (2pt);
\fill (10,3) circle (2pt);

\tiny
\node at (4.2,3.2){$A$};
\node (Al) at (4,3){};
\node at (3.2,0.8){$C$};
\node (Cl) at (3,1){};
\node at (0.6,0.8){$B\cap C$};
\node (BnCl) at (1,1){};
\node at (0.8,2.2){$B$};
\node (Bl) at (1,2){};

\node at (6.8,2.2){$C$};
\node (Cr) at (7,2){};
\node at (6.6,-0.2) {$B\cap C$};
\node (BnCr) at (7,0){};
\node at (9.2,-0.2) {$B$};
\node (Br) at (9,0){};
\node at (10.2,3.2) {$A$};
\node (Ar) at (10,3){};

\draw[->, red] (BnCl) -- (Bl) node[midway,left]{in $T'$};
\draw[->, bend right] (BnCl) to (Cl);
\draw[->] (Bl) -- (Al) node[midway, above]{in $T$};
\draw[->, blue, dashed] (Cl) -- (Al) node[above=-17mm]{need in $T'$};
\node at (2,0.6){in $T$};

\draw[->, bend left] (BnCr) to (Cr);
\node at (6.4,1.5) {in $T$};
\draw[->,red, bend right] (BnCr) to (Br);
\node[red] at (8,-0.4){in $T'$};
\draw[->, blue, dashed] (Cr) -- (Ar); \node[blue] at (8,2.7){need in $T'$};
\draw[->] (Br)--(Ar) node[midway, right]{in $T$};
\end{tikzpicture}
    \caption{Compatibility visualised for $\C$-transfer systems and the underlying grid. Note that $A$ is an arbitrary vertex and not necessarily $(r,s)$.}
    \label{fig:CpqCompatDiags}
\end{figure}
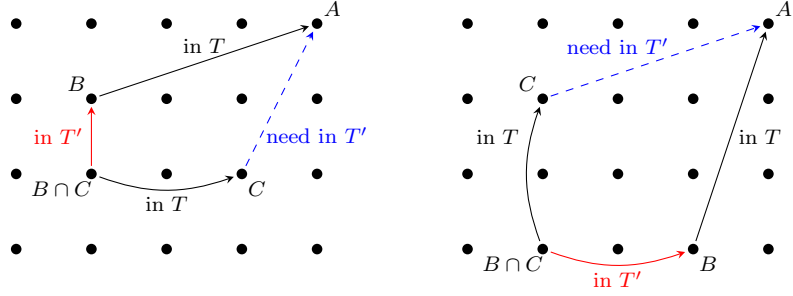

\end{itemize}

\end{remark}

\section{Identifying Lesser Simply Paired $\C$-Transfer Systems}\label{sec:LSPOnGrid}

This section focuses specifically on $\C$-transfer systems, and our main goal is to determine which $\C$-transfer systems are lesser simply paired. In other words, we determine all $\C$-transfer systems $T$ such that the only larger transfer systems that are compatible with $T$ are $\Hull(T)$ and the complete transfer system.

We showed in  \Cref{thm:OneCompLSP} that for an arbitrary group $G$, a connected transfer system is always lesser simply paired. In \Cref{thm:3IsNotLSP} we will show that if a $\C$-transfer system has three or more components then it is not lesser simply paired. The rest of this section is dedicated to discussing $\C$-transfer systems with two components. In this case, whether or not a transfer system is lesser simply paired depends on the shapes of the two components.

To show that a $\C$-transfer system $T$ with multiple connected components is \emph{not} lesser simply paired we need to show that there exists a $\C$-transfer system $T'$ such that $(T,T')$ is a compatible pair but $T'$ is neither $\Hull(T)$ nor the complete transfer system. We detail  our strategy for constructing such a $T'$ in the following remark.

\begin{remark}\label{rem:HowToShowNotLSP}  To search for such a $T'$, we add a new edge  to $\Hull(T)$ and let $T'$ to be the smallest transfer system such that
\begin{enumerate}
\item $T'$ contains $\Hull(T)$,
\item $T'$ contains the newly added edge, and
\item $(T,T')$ is a compatible pair.
\end{enumerate}

Note that this $T'$ exists as it is the intersection of all transfer systems satisfying these three conditions. The set of transfer systems satisfying these three conditions is nonempty as it contains $T_c$, and the intersection of two transfer systems is again a transfer system. In \Cref{lem:smallestcompatible}, we give an explicit algorithm for constructing $T'$ when the added edge has source the identity vertex $(0,0)$, which we often use in our later arguments.

Since $\Hull(T)$ does not contain the new edge, we know that $T'$ is not $\Hull(T)$. If for all possible added edges, the definition of compatibility forces $T'$ to  be the complete transfer system, then $T$ is lesser simply paired. See \Cref{ex:LSPTS}. However, if there exists an edge such that the created $T'$ is not complete, then $T$ is not lesser simply paired. See \Cref{ex:RectangleEx,ex:LShapedEx}. In practice, given that the added edge is not already in $\Hull(T)$, its source and target are in different connected components of $T$. 
\end{remark}

Following the strategy outlined above, to prove that a $\C$-transfer system $T$ with three connected components is not lesser simply paired (i.e., \Cref{thm:3IsNotLSP}) we add to $\Hull(T)$ an edge connecting  $(0,0)$ to the ``smallest" vertex not contained in $\0_T$. (Recall from \Cref{not:component} that $\0_T$ is the connected component of $(0,0)$.)  In what follows we formalize the definition of smallest vertex and develop properties of the smallest vertex of a connected component of $T$ that we use in the proof of \Cref{thm:3IsNotLSP}. We start by placing a lexicographic ordering on the vertices of the subgroup lattice of $\C$.

\begin{definition}[Lexicographically smaller, $<_L$]\label{def:lexicographic}
    Let $(a,b)$ and $(c,d)$ be vertices of the subgroup lattice of $\C$. We say $(a,b)$ is \textit{lexicographically smaller} than $(c,d)$, denoted $(a,b) <_L (c,d)$, if either $a<c$, or $a=c$ and $b<d$. 
\end{definition}

Colloquially, the above definition orders the vertices of $\C$  by moving up the columns going left to right. Hence, $(0,0)$ is the smallest vertex, $(r,s)$ is the largest vertex, and $$(i-1,s-1)<_L(i-1,s)<_L(i,0)<_L(i,1)$$ for all $i$. Further, if $T$ contains a non-trivial edge $(a,b)\to(c,d)$ then $C_{p^aq^b}$ is a subgroup of $ C_{p^cq^d}$, so $a\leq c$ and $b\leq d$. Thus, if $T$ contains the edge $(a,b) \to (c,d)$, then $(a,b)<_L (c,d)$. 

In general, $(a,b)$ being lexicographically smaller than $(c,d)$ does not imply that $a \le c$ and $b \le d$; for example, $(i-1,s)$ is lexicographically smaller than $(i,0)$. However, in the following lemma we show that if $(a,b)$ is the lexicographically smallest vertex of a connected component of a $\C$-transfer system then it is also the coordinate-wise smallest vertex of the component. We refer to this vertex as simply the smallest vertex of the component.

\begin{proposition}\label{lem:SmallestIsSmallest}
  Let $T$ be a $\C$-transfer system, and  let $(a,b)$ be the lexicographically smallest vertex of its connected component $\ij{a,b}$ in $T$. If $(x,y)$ is a vertex in $\ij{a,b}$, then $a \leq x$ and $b \leq y$.  
\end{proposition}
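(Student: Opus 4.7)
The plan is to first dispose of the easy half of the statement and then reduce to a single nontrivial case. Since $(a,b)$ is lex-smallest in $\ij{a,b}$, for any $(x,y)\in\ij{a,b}$ we have $(a,b)\leq_L (x,y)$ (in the weak sense), which immediately yields $a\leq x$. If moreover $a=x$, the lex inequality forces $b\leq y$ and we are done. So the real content of the proposition is the case $a<x$, where lex minimality on its own gives no information relating $b$ and $y$ (one could imagine $a<x$ while $y<b$, and the whole point of the statement is to exclude this).

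To handle that case, I would apply \Cref{lem:zigzag} to the pair $(a,b)$ and $(x,y)$. Tracing through that lemma's proof, which uses the restriction axiom to replace any span of the form $A\to C\leftarrow B$ by $A\leftarrow A\cap B\to B$, one can arrange the connecting path of length at most two as
\[
(a,b)\;\leftarrow\;(c,d)\;\rightarrow\;(x,y),
\]
so that $(c,d)$ is a common ``source'' in $T$. The subgroup axiom then forces $c\leq a$, $d\leq b$, $c\leq x$, and $d\leq y$, and since $(c,d)$ is joined to $(a,b)$ by an edge, $(c,d)$ lies in the component $\ij{a,b}$.

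The finishing move is to invoke lex minimality a second time: $(a,b)\leq_L(c,d)$ together with $c\leq a$ rules out $a<c$, so $a=c$, and the lex inequality then reduces to $b\leq d$. Combined with $d\leq b$ this yields $d=b$, so $(c,d)=(a,b)$. The edge $(c,d)\to(x,y)$ is therefore the edge $(a,b)\to(x,y)$ in $T$, and the subgroup axiom applied to it gives $b=d\leq y$, completing the proof. Edge cases (path of length $0$ or $1$) are subsumed by this argument by taking $(c,d)=(a,b)$, or else immediately collapse via lex minimality.

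I do not anticipate a real technical obstacle; the one genuine subtlety is recognising that when $a<x$ the lex order carries no information about the second coordinate, so the only way to compare $b$ and $y$ is to descend inside $T$ to a common source. The restriction form of \Cref{lem:zigzag} provides exactly such a source, after which lex minimality applied a second time collapses it back to $(a,b)$ and finishes the argument.
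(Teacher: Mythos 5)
Your proof is correct, but its logical structure differs from the paper's. The paper argues by contradiction: assuming $a<x$ but $y<b$, it takes the length-two path from \Cref{lem:zigzag} in the span form $(a,b)\to(i,j)\leftarrow(x,y)$ (any edge touching a lex-minimal vertex must leave it, by the subgroup axiom), restricts $(a,b)\to(i,j)$ along $(x,y)$ to produce the edge $(a,y)\to(x,y)$, and concludes that $(a,y)$ is a lexicographically smaller member of the component, a contradiction. You instead work directly: you normalize the path to the cospan form $(a,b)\leftarrow(c,d)\rightarrow(x,y)$ --- which the bare statement of \Cref{lem:zigzag} does not assert, but its proof does, and the paper itself reads the lemma this way in \Cref{fact:HullCompsComplete}, so this step is legitimate --- and then apply lex minimality a second time to collapse $(c,d)$ to $(a,b)$, reading both inequalities off the subgroup axiom. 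The ingredients are identical (zigzag lemma, one use of restriction, lex minimality), but your route proves strictly more along the way: collapsing the common source to $(a,b)$ shows that $T$ actually contains the edge $(a,b)\to(x,y)$, which is exactly the content of the paper's subsequent \Cref{lem:ArromFromSmallest}; the paper's contradiction argument is marginally shorter for this proposition alone but must then establish that edge statement in a separate lemma. One cosmetic note: your opening case split ($a=x$ versus $a<x$) is harmless but unnecessary, since your main argument handles both cases uniformly.
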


\begin{proof}
 Let $(a,b)$ be the lexicographically smallest vertex in $\ij{a,b}$, so for all $(x,y)$ in $\ij{a,b}$ either $a<x$ or $a=x$ and $b \leq y$. Assume for contradiction that there exists $(x,y)$ in $\ij{a,b}$ such that $a<x$ but $b>y$. (Thus, $(a,b)$ is above and to the left of $(x,y)$.)  Since $(a,b)$ and $(x,y)$ are in the same connected component, there exists a path of edges between them. Further, by the subgroup condition of \Cref{def:CtransSys}, $T$ contains neither the edge $(a,b) \to (x,y)$ nor the edge $(x,y)\to (a,b)$. Thus, the path between these two vertices must be undirected. By \Cref{lem:zigzag}, we can assume that this path has length two.
Since $(a,b)$ is the lexicographically smallest vertex, the path is of the form shown below for some vertex $(i,j)$, and thus, $(i,j)$ is above and to the right of both vertices.
\[
(a,b) \rightarrow (i,j) \leftarrow (x,y)
\]
However, by restriction of $(a,b) \to (i,j)$ with  $(x,y)$, $T$ must contain the edge  $(a,y) \to (x,y)$ as well. This means that $(a,y) \in \ij{a,b}$, but $(a,y)$ is lexicographically smaller than $(a,b)$. This is a contradiction, and thus $b \le y$. \end{proof}

\begin{definition} We define the vertex $(a,b)$ to be the \textit{smallest} vertex of a connected component of a transfer system if it is the lexicographically smallest vertex in the component. 
\end{definition}

The following result follows  from  \Cref{lem:SmallestIsSmallest}  since any connected component must have a smallest vertex, and we use it in the proof of \Cref{prop:2CompShapes} to determine all possible component shapes when a transfer system has exactly two connected components.
\begin{corollary}\label{cor:GCompIsRect}
    Let $T$ be a $\C$-transfer system. Then the connected component $\ij{r,s}$ is a rectangle. 
\end{corollary}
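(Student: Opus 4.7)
The plan is to show the connected component $\ij{r,s}$ equals the full rectangle with corners $(a,b)$ and $(r,s)$, where $(a,b)$ is the smallest vertex of the component.

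First, by \Cref{lem:SmallestIsSmallest}, every vertex $(x,y) \in \ij{r,s}$ satisfies $a \le x$ and $b \le y$. Since $(r,s)$ is the maximal vertex of the grid, we also have $x \le r$ and $y \le s$. Hence $\ij{r,s}$ is contained in the rectangle
\[
R := \{(x,y) : a \le x \le r,\ b \le y \le s\}.
\]

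The heart of the argument is to establish that the single edge $(a,b)\to(r,s)$ lies in $T$. Since $(a,b)$ and $(r,s)$ are in the same connected component, \Cref{lem:zigzag} provides an undirected path of length at most two between them. Any such path has the form $(a,b)\to(i,j)\leftarrow(r,s)$, $(a,b)\leftarrow(i,j)\to(r,s)$, or a single edge. In the first case the subgroup condition forces $(i,j) = (r,s)$, yielding the edge $(a,b)\to(r,s)$ directly; in the second case it forces $(i,j) = (a,b)$, again giving $(a,b)\to(r,s)$. The third case already has the edge. So $(a,b)\to(r,s) \in T$.

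Now restrict this edge along an arbitrary vertex $(x,y) \in R$: since $a \le x \le r$ and $b \le y \le s$, we have $\min(a,x) = a$, $\min(b,y) = b$, $\min(r,x) = x$, $\min(s,y) = y$, so restriction produces the edge $(a,b)\to(x,y)$ in $T$. Thus every vertex of $R$ lies in $\ij{r,s}$, and we conclude $\ij{r,s} = R$, which is a rectangle. The only subtle step is the forcing of the length-two zigzag to collapse via the maximality of $(r,s)$; everything else is a direct application of the restriction axiom.
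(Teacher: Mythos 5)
Your proof is correct and takes essentially the same route as the paper's: bound the component inside the rectangle via \Cref{lem:SmallestIsSmallest}, collapse the length-at-most-two path from \Cref{lem:zigzag} to the single edge $(a,b)\to(r,s)$, and fill in the rectangle by restriction. The only minor imprecision is that in your second zigzag case the subgroup condition alone gives $(i,j)\le(a,b)$ coordinate-wise; to conclude $(i,j)=(a,b)$ you also need the coordinate-wise minimality of $(a,b)$ in its component (again from \Cref{lem:SmallestIsSmallest}, since $(i,j)$ lies in the same component), which is the same implicit step the paper takes.
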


\begin{proof}
    By \Cref{lem:SmallestIsSmallest} the connected component of $\ij{r,s}$ has a smallest vertex, say $(a,b)$. Then every vertex  $(i,j)$ in $\ij{r,s}$  has the property $a \leq i \leq r,~ b \leq j \leq s$. Consider the (possibly undirected) path of length at most two connecting $(a,b)$ and $(r,s)$. By the subgroup condition, $(a,b)$ must be the start of any edge containing it, and $(r,s)$ must be the end. Thus, the path connecting them is of length one, and $T$ contains the edge $(a,b) \to (r,s)$. Then by restriction, if $a \leq i \leq r,~ b \leq j \leq s$, then $T$ contains $(a,b) \to (i,j)$, and hence every vertex in the rectangle is in the component of $(r,s)$. 
\end{proof}

The next lemma shows that there is an edge from the smallest vertex of a connected component of a $\C$-transfer system to every other edge in that component. This is needed both in the proof of \Cref{thm:3IsNotLSP} and in the proof of \Cref{lem:ThickNotLSP}.

\begin{lemma}\label{lem:ArromFromSmallest}
     Let $T$ be a $C_{p^rq^s}$-transfer system. Further, let $\ij{x,y}$ be the connected component of the vertex $(x,y)$, and let $(a,b)$ be the smallest vertex in  $\ij{x,y}$. Then $T$ contains the edge $(a,b)\to(x,y)$. 
    \end{lemma}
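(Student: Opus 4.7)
The plan is to use \Cref{lem:zigzag} to reduce the problem to analyzing undirected paths of length at most two between $(a,b)$ and $(x,y)$, and then do a case analysis based on the direction of the edges in such a path. The main tools will be restriction, transitivity, and the fact that $(a,b)$ is lexicographically smallest in $\ij{x,y}$ (via \Cref{lem:SmallestIsSmallest}, which gives $a\leq x$ and $b\leq y$).

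First I would dispose of the trivial case $(a,b)=(x,y)$ by reflexivity. Otherwise, by \Cref{lem:zigzag} there is an undirected path in $T$ of length at most two from $(a,b)$ to $(x,y)$. If the path has length one, either $(a,b)\to(x,y)$ is in $T$ (and we are done) or $(x,y)\to(a,b)$ is in $T$, in which case the subgroup condition forces $x\leq a$ and $y\leq b$; combined with $a\leq x,b\leq y$ from \Cref{lem:SmallestIsSmallest}, this forces $(a,b)=(x,y)$, contrary to assumption.

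If the path has length two, say $(a,b)\text{---}(i,j)\text{---}(x,y)$, then there are four configurations for the arrow directions. In the configuration $(a,b)\to(i,j)\to(x,y)$, transitivity gives $(a,b)\to(x,y)$. In the configuration $(a,b)\to(i,j)\leftarrow(x,y)$, I would apply the restriction axiom to the edge $(a,b)\to(i,j)$ along the vertex $(x,y)$: since $a\leq x$, $b\leq y$, $x\leq i$, and $y\leq j$, the mins collapse to $(\min(a,x),\min(b,y))=(a,b)$ and $(\min(i,x),\min(j,y))=(x,y)$, yielding the desired edge. In the configuration $(a,b)\leftarrow(i,j)\to(x,y)$, the first edge gives $i\leq a$ and $j\leq b$, while minimality of $(a,b)$ in its component forces $(a,b)\leq_L(i,j)$, and these together force $(i,j)=(a,b)$, so the path collapses to the length-one case already handled. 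The configuration $(a,b)\leftarrow(i,j)\leftarrow(x,y)$ gives $(x,y)\to(a,b)$ by transitivity, and again \Cref{lem:SmallestIsSmallest} forces $(a,b)=(x,y)$.

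The main subtlety, and the step I expect to require the most care, is the V-shaped configuration $(a,b)\to(i,j)\leftarrow(x,y)$, where the argument is not via transitivity but through the restriction axiom; this is precisely where the hypothesis $a\leq x,b\leq y$ supplied by \Cref{lem:SmallestIsSmallest} becomes essential, since otherwise the mins would not collapse favorably. All other cases are either direct consequences of transitivity or degenerate to shorter paths using the minimality of $(a,b)$.
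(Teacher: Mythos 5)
Your proof is correct and takes essentially the same route as the paper's: both reduce via \Cref{lem:zigzag} to an undirected path of length at most two and resolve the key V-shaped configuration $(a,b)\to(u,v)\leftarrow(x,y)$ by restricting the edge $(a,b)\to(u,v)$ along $(x,y)$, with \Cref{lem:SmallestIsSmallest} supplying $a\leq x$ and $b\leq y$ so that the minima collapse to give $(a,b)\to(x,y)$. The only difference is cosmetic: the paper dispatches the remaining configurations in one stroke by observing that minimality and the subgroup condition force any edge involving $(a,b)$ to start at $(a,b)$, whereas you enumerate the four arrow-direction cases explicitly.
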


\begin{proof} 
    Consider an undirected path of length at most two between $(a,b)$ and $(x,y)$. By minimality of $(a,b)$ and the subgroup condition, any edge involving $(a,b)$ must start at $(a,b)$. Thus, if the path is of length one, we are done. If it is of length two, it is of the form
    \[(a,b) \to (u,v) \leftarrow (x,y)\]
    for some $(u,v)$, with $a\leq x \leq u$ and $b\leq y \leq v$. Restricting $(a,b)\to(u,v)$ along $(x,y)$ gives that the edge $(a,b)\to (x,y)$ is in $T$, as desired.
\end{proof}

We now prove that if a $\C$-transfer system has more than two connected components then it is not lesser simply paired.

\begin{theorem}\label{thm:3IsNotLSP}
     If a $\C$-transfer system has three or more connected components, then it is not lesser simply paired. 
\end{theorem}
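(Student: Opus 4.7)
The plan is to apply the strategy of Remark~\ref{rem:HowToShowNotLSP}: I will adjoin a single edge to $\Hull(T)$ and show that the resulting smallest compatible transfer system $T'$ is neither $\Hull(T)$ nor $T_c$. Write $C_0=\ij{0,0}_T$, let $(a,b)$ be the lexicographically smallest vertex not in $C_0$, set $C_1=\ij{a,b}_T$, and fix any third component $C_2$ (which exists by hypothesis). I would adjoin $(0,0)\to(a,b)$ and construct $T'$ via Lemma~\ref{lem:smallestcompatible} applied to $S=\{(0,0)\to(a,b)\}$. The key claim is that $T'$ contains no edge between $C_2$ and its complement; granting this, any vertex of $C_2$ remains disconnected from $(0,0)$ in $T'$ so $T'\ne T_c$, while $T'\ne \Hull(T)$ because the adjoined edge crosses between $C_0$ and $C_1$.

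First I would handle steps (1)--(3) of the lemma. Restriction of the adjoined edge produces edges $(0,0)\to v$ for every $v$ coordinate-wise below $(a,b)$; the lex-minimality of $(a,b)$ forces any such $v\ne(a,b)$ to lie in $C_0$. Closing under transitivity against $T$ then yields $(0,0)\to v$ for each $v\in C_1$, using Lemma~\ref{lem:ArromFromSmallest} to supply the edge $(a,b)\to v$ in $T$ and Proposition~\ref{fact:HullCompsComplete} to identify the $T$- and $\Hull(T)$-components. Conjugation is vacuous. Thus after steps (1)--(3) every new edge has source $(0,0)$ and target in $C_0\cup C_1$, so no such edge touches $C_2$.

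Then I would analyze step (4). Suppose compatibility forces $C\to A$ via edges $B\to A\in T$ and $B\cap C\to B$ in the current $T'$; restricting $B\to A$ along $C\le A$ automatically places $B\cap C\to C$ in $T$. If $B\cap C\to B\in T$, then $B\cap C$, $B$, $C$, $A$ all lie in a single $T$-component, and Proposition~\ref{fact:HullCompsComplete} already puts $C\to A$ in $\Hull(T)\subseteq T'$. Otherwise $B\cap C\to B$ is newly added, forcing $B\cap C=(0,0)$ and $B\in C_0\cup C_1$; then $B\to A\in T$ keeps $A$ in the same component as $B$, while $(0,0)\to C$ in $T$ forces $C\in C_0$. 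In every subcase the forced edge $C\to A$ has both endpoints in $C_0\cup C_1$ and therefore avoids $C_2$. The non-iteration claim inside the proof of Lemma~\ref{lem:smallestcompatible} shows that one pass of compatibility suffices, completing the proof of the key claim and hence of the theorem.

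The main obstacle will be the bookkeeping in step~(4): one must split carefully on whether the horizontal edge $B\cap C\to B$ is original or newly added and, in the latter case, use that it has source $(0,0)$ to pin down where $B$, $C$, and $A$ can live. The lex-minimality of $(a,b)$ is exactly what guarantees that restriction of the adjoined edge never escapes $C_0\cup\{(a,b)\}$, and this property propagates through the closure to keep $C_2$ isolated from the rest.
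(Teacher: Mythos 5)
Your proposal is correct and follows essentially the same route as the paper: adjoin the edge $(0,0)\to(a,b)$ to the lexicographically smallest vertex $(a,b)$ outside $\0_T$, construct the smallest compatible $T'$ via Lemma~\ref{lem:smallestcompatible}, and use the compatibility diagrams of Remark~\ref{rem:CpqCompDiags} to show a third component never becomes connected to $(0,0)$, so $T'$ is not complete. The only notable difference is organizational: by tracking that every genuinely new edge of $T'$ has source in $\0_T$ and target in $\0_T\cup\ij{a,b}_T$ (the case-(i) compatibility edges being internal to a single $T$-component, hence never crossing), you isolate an \emph{arbitrary} third component $C_2$, whereas the paper isolates $\ij{r,s}_T$ specifically and therefore needs a preliminary argument (your proof happily avoids) that $(a,b)$ cannot lie in $\ij{r,s}_T$ when there are three or more components.
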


\begin{proof}
Let $T$ be a transfer system with at least  three components, and  let $(i,j)$ be the lexicographically smallest vertex not in $\0_T$. (Note that every subset of the grid has a lexicographically smallest element.)
First, suppose $(i,j) \in \ij{r,s}_T$. Then since $(i,j)$ is the smallest vertex not in $\0_T$, it must be the smallest vertex in $\ij{r,s}_T$. By \Cref{lem:ArromFromSmallest}, $T$ contains the edge $(i,j) \to (r,s)$, and by restriction, contains edges $(i,j) \to (x,y)$ for all $(x,y)$ 
lexicographically larger than $(i,j)$. Since $(i,j)$ is the lexicographically smallest vertex not in $\0_T$, it follows that $T$ contains an edge from $(i,j)$ to all $(x,y)$ not in $\0_T$ and hence all $(x,y)$ not in $\0_T$ are in $\ij{r,s}_T$.  This implies $T$ has exactly two connected components, which is a contradiction.

Now suppose $(i,j)$ is not in $\ij{r,s}_T$. Using \Cref{rem:HowToShowNotLSP}, let $T'$ be the smallest  transfer system compatible with $T$ that contains $(0,0)\to (i,j)$. We show that $T'$ is not complete by showing that $T'$ does not contain an edge from $(0,0)$ to any vertex in $\ij{r,s}_T$. Since $(i,j)$ is the smallest vertex not in $\0_T$, adding in all edges required by restriction and transitivity to make $T'$ into a transfer system does not produce an edge in $T'$ from $(0,0)$ to $\ij{r,s}_T$. 

By \Cref{lem:smallestcompatible} such an edge would only come from the compatibility requirements. In order for compatibility to induce an edge between vertices in $\0$ and $\ij{r,s}_T$, following the diagrams of \Cref{rem:CpqCompDiags}, subgroups $A$ and $B$ must be in $\ij{r,s}_T$, subgroup $C$ must be in $\0$ and $T'$ must contain the edge $B\cap C \to B$. But, since $B\cap C$ must be in $\0_T$, an edge $B\cap C \to B$ in $T'$ would be an edge between $\0_T$ and $\ij{r,s}_T$. So, in order for $T'$ to contain an edge from $(0,0)$ to a vertex in $\ij{r,s}_T$, it must already contain an edge from $\0$ to $\ij{r,s}_T$, and we showed in the previous paragraph that no such edge exists.

Thus, $T'$ contains no edges $(0,0)$ to $\ij{r,s}_T$ and  is not complete. It follows that $T$ is not lesser simply paired. \end{proof}

\begin{example}\label{ex:3NotLSPEx}  \Cref{fig:3NotLSPEx} shows an example of a $C_{p^3q^2}$-transfer system $T$  with three connected components. The blue vertex $(2,0)$ is the lexicographic smallest vertex not contained in $\0_T$. The $C_{p^3q^2}$-transfer system $T'$  is the  smallest  transfer system that contains the pink edge $(0,0)\to (2,0)$ such that $(T,T')$ is a compatible pair. The green edges are the  edges in $T'$ other than $(0,0) \to (2,0)$ that are not in $T$. Note that if $T'$ contains the pink edge, then $T'$ must contain all green edges in order to satisfy the transitivity and restriction conditions of the definition of a $\C$-transfer system (\Cref{def:CtransSys}) and in order to satisfy Criterion 2  of the definition of a compatible pair of $\C$-transfer systems (\Cref{rem:CpqCompDiags}). Since $T'$ is larger than $\Hull(T)$ but is not complete, $T$ is not lesser simply paired. 
\begin{figure}[hbt!]
    \centering
    \definecolor{nicegreen}{HTML}{009B55}
\begin{tikzpicture}[>=stealth, baseline=(current bounding box.center), bend angle=20]
\fill (0,0) circle (2pt);
\fill (1,0) circle (2pt);
\fill[blue] (2,0) circle (2pt);
\fill (3,0) circle (2pt);

\fill (0,1) circle (2pt);
\fill (1,1) circle (2pt);
\fill (2,1) circle (2pt);
\fill (3,1) circle (2pt);

\fill (0,2) circle (2pt);
\fill (1,2) circle (2pt);
\fill (2,2) circle (2pt);
\fill (3,2) circle (2pt);

\fill (5,0) circle (2pt);
\fill (6,0) circle (2pt);
\fill[blue] (7,0) circle (2pt);
\fill (8,0) circle (2pt);

\fill (5,1) circle (2pt);
\fill (6,1) circle (2pt);
\fill (7,1) circle (2pt);
\fill (8,1) circle (2pt);

\fill (5,2) circle (2pt);
\fill (6,2) circle (2pt);
\fill (7,2) circle (2pt);
\fill (8,2) circle (2pt);
\node (00a) at (0,0){};
\node (10a) at (1,0){};
\node (20a) at (2,0){};
\node (30a) at (3,0){};
\node (01a) at (0,1){};
\node (11a) at (1,1){};
\node (21a) at (2,1){};
\node (31a) at (3,1){};
\node (02a) at (0,2){};
\node (12a) at (1,2){};
\node (22a) at (2,2){};
\node (32a) at (3,2){};
\node (00b) at (5,0){};
\node (10b) at (6,0){};
\node (20b) at (7,0){};
\node (30b) at (8,0){};
\node (01b) at (5,1){};
\node (11b) at (6,1){};
\node (21b) at (7,1){};
\node (31b) at (8,1){};
\node (02b) at (5,2){};
\node (12b) at (6,2){};
\node (22b) at (7,2){};
\node (32b) at (8,2){};
\draw[->] (00a) to (01a);
\draw[->, bend left] (00a) to (02a);
\draw[->] (00a) to (12a);
\draw[->] (00a) to (11a);
\draw[->] (01a) to (11a);
\draw[->] (00a) to (10a);
\draw[->] (10a) to (11a);
\draw[->, bend right] (10a) to (12a);
\draw[->] (20a) to (21a);
\draw[->, bend right] (20a) to (22a);
\draw[->] (30a) to (31a);
\draw[->, bend right] (30a) to (32a);

\draw[->] (00b) to (01b);
\draw[->, bend left] (00b) to (02b);
\draw[->] (00b) to (12b);
\draw[->] (00b) to (11b);
\draw[->] (01b) to (11b);
\draw[->] (00b) to (10b);
\draw[->] (10b) to (11b);
\draw[->, bend right] (10b) to (12b);
\draw[->] (20b) to (21b);
\draw[->, bend right] (20b) to (22b);
\draw[->] (30b) to (31b);
\draw[->, bend right] (30b) to (32b);

\draw[->, nicegreen, bend right] (00b) to (22b);
\draw[->, nicegreen] (00b) to (21b);
\draw[->, nicegreen] (01b) to (02b);
\draw[->, nicegreen] (01b) to (12b);
\draw[->, nicegreen, bend left] (01b) to (21b);
\draw[->, nicegreen] (02b) to (12b);
\draw[->, nicegreen, bend left] (02b) to (22b);
\draw[->, nicegreen] (11b) to (12b);
\draw[->, nicegreen] (21b) to (22b);
\draw[->, nicegreen] (31b) to (32b);

\draw[->, magenta, bend right] (00b) to (20b);

\draw[blue] (-0.3,-0.3)--(-0.3,2.3)--(1.3,2.3)--(1.3,-0.3)--(-0.3,-0.3);
\draw[blue] (1.7,-0.3)--(1.7,2.3)--(2.3,2.3)--(2.3,-0.3)--(1.7,-0.3);
\draw[blue] (2.7,-0.3)--(2.7,2.3)--(3.3,2.3)--(3.3,-0.3)--(2.7,-0.3);
\draw[blue] (4.7,-0.3)--(7.3,-0.3)--(7.3,2.3)--(4.7,2.3)--(4.7,-0.3);
\draw[blue] (7.7,-0.3)--(8.3,-0.3)--(8.3,2.3)--(7.7,2.3)--(7.7,-0.3);

\node at (1.5,-0.7){$T$};
\node at (6.5,-0.7){$T'$};
\end{tikzpicture}
    \caption{The transfer systems $T$ and $T'$ form a compatible pair, hence  $T$ is not lesser simply paired.}
    \label{fig:3NotLSPEx}
\end{figure}
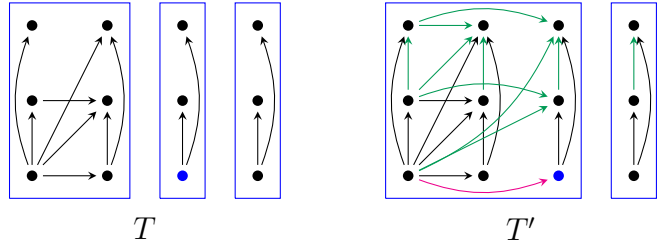
\end{example}

The remainder of this section is dedicated to the case when a $\C$-transfer system has exactly two connected components. In this situation, whether or not the transfer system is lesser simply paired depends on the shapes of the two components. In \Cref{prop:2CompShapes} we show that if a transfer system has two connected components then there are three possibilities for the shapes of the components: two horizontally stacked rectangles, two vertically stacked rectangles, or $\0$ is an L-shape and $\ij{r,s}$ is a rectangle, see \Cref{fig:2CompShapes}. We introduce some notation before stating the proposition.

\begin{definition}[$H_k$, $V_{\ell}$, and $L_{(\ell,k)}$ notation]    
\label{rem:shapes} We define three subsets of the grid of $r$ columns and $s$ rows of vertices. 
\begin{itemize}
    \item Define $V_{\ell}$ to be $\{ (i,j)  \mid \ 0\leq i \le \ell <r \text{ and } 0 \leq j \leq s \}$. In other words, $V_{\ell}$ contains the leftmost $\ell$ columns of vertices of the grid.
    \item Define $H_k$ to be $\{(i,j) \mid \ 0\leq i \leq r \text{ and } 0 \leq j \leq k<s \}$. Thus, $H_k$ contains the bottom $k$ rows of vertices of the grid.
    \item Define $L_{(\ell,k)}$ to be $ \{(i,j) \mid 0 \le i \le \ell \text{ and } 0 \le j \le s \text{ or } 0 \le i \le r \text{ and }  0 \le j \le k \}$. So, $L_{(\ell,k)}$ contains the  leftmost $\ell $ columns  and the bottom $k$ rows of vertices of the grid, forming an L-shape with the ``nook" of the L at the vertex $(\ell,k)$.
    
    \end{itemize}
\end{definition}

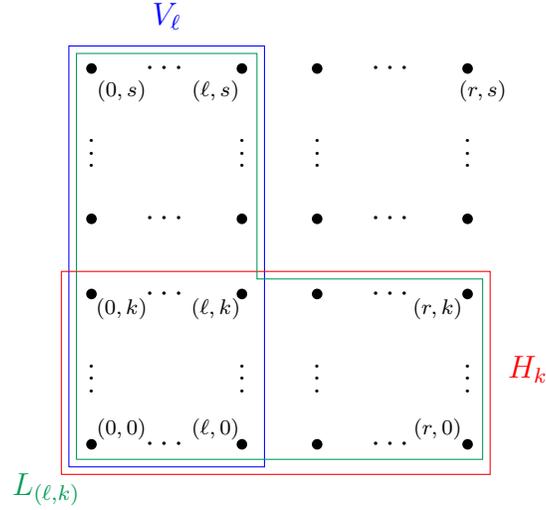
\begin{figure}
\[
\begin{tikzpicture}
\definecolor{nicegreen}{HTML}{009B55}
    \fill (0,0) circle (2pt);
    \fill (0,2) circle (2pt);
    \fill (0,3) circle (2pt);
    \fill (0,5) circle (2pt);
    \fill (2,0) circle (2pt);
    \fill (2,2) circle (2pt);
    \fill (2,3) circle (2pt);
    \fill (2,5) circle (2pt);
    \fill (3,0) circle (2pt);
    \fill (3,2) circle (2pt);
    \fill (3,3) circle (2pt);
    \fill (3,5) circle (2pt);
    \fill (5,0) circle (2pt);
    \fill (5,2) circle (2pt);
    \fill (5,3) circle (2pt);
    \fill (5,5) circle (2pt);

\node at (0,1){$\vdots$};
\node at (2,1){$\vdots$};
\node at (3,1){$\vdots$};
\node at (5,1){$\vdots$};
\node at (0,4){$\vdots$};
\node at (2,4){$\vdots$};
\node at (3,4){$\vdots$};
\node at (5,4){$\vdots$};
\node at (1,0){$\cdots$};
\node at (4,0){$\cdots$};
\node at (1,2){$\cdots$};
\node at (4,2){$\cdots$};
\node at (1,3){$\cdots$};
\node at (4,3){$\cdots$};
\node at (1,5){$\cdots$};
\node at (4,5){$\cdots$};

\draw[blue] (-0.3,-0.3)--(-0.3,5.3)--(2.3,5.3)--(2.3,-0.3)--(-0.3,-0.3);
\draw[red] (-0.4,-0.4)-- (-0.4,2.3)--(5.3,2.3)-- (5.3,-0.4)--(-0.4,-0.4);
\draw[nicegreen] (-0.2,-0.2)--(-0.2,5.2)--(2.2,5.2)--(2.2,2.2)--(5.2,2.2)--(5.2,-0.2)--(-0.2,-0.2);

\node[blue] (V) at (1,5.7){$V_\ell$};
\node[red] (H) at (5.8,1){$H_k$};
\node[nicegreen] (L) at (-0.6,-0.6){$L_{(\ell,k)}$};

\tiny
\node at (0.4,0.2){$(0,0)$};
\node at (0.4,1.8){$(0,k)$};
\node at (1.65,0.2){$(\ell,0)$};
\node at (1.65,1.8){$(\ell,k)$};
\node at (0.4,4.7){$(0,s)$};
\node at (1.65,4.7){$(\ell,s)$};
\node at (5.2,4.7){$(r,s)$};
\node at (4.6,0.2){$(r,0)$};
\node at (4.6,1.8){$(r,k)$};
\end{tikzpicture}
\]
\caption{Three important shapes of connected components, see \Cref{rem:shapes}.}
\label{fig:2CompShapes}
\end{figure}

\begin{proposition}\label{prop:2CompShapes}
     Suppose $T$ is a $\C$-transfer system with exactly  two connected components. Let $V_{\ell}$, $H_k$ and $L_{(\ell,k)}$ be as defined in \Cref{rem:shapes}. Then the connected component  $\0$ is either $V_{\ell}$ for some $\ell<r$, $H_k$ for some $k<s$, or $L_{(\ell,k)}$ for some $\ell<r$ and $k<s$. 
\end{proposition}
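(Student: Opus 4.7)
The plan is to leverage \Cref{cor:GCompIsRect}, which already tells us that $\ij{r,s}_T$ is a rectangle, and then observe that $\0_T$ must be its set-theoretic complement in the grid.

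More concretely, first apply \Cref{cor:GCompIsRect} to obtain a smallest vertex $(a,b)$ of $\ij{r,s}_T$, so that
\[
\ij{r,s}_T = \{(i,j) \mid a \leq i \leq r,\ b \leq j \leq s\}.
\]
Since $T$ has exactly two connected components and every vertex lies in some component, every vertex not in $\ij{r,s}_T$ must lie in $\0_T$. Therefore
\[
\0_T = \{(i,j) \mid i < a \text{ or } j < b\}.
\]
Because $(0,0) \in \0_T$ but $(0,0) \notin \ij{r,s}_T$, we have $(a,b) \neq (0,0)$, so at least one of $a \geq 1$ or $b \geq 1$ holds.

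Now perform a case analysis on whether $a$ or $b$ is zero. If $a = 0$ (forcing $b \geq 1$), then the condition $i < a$ is vacuous, so $\0_T = \{(i,j) \mid 0 \leq i \leq r,\ 0 \leq j \leq b-1\} = H_{b-1}$, and $b - 1 < s$ follows from $b \leq s$. Symmetrically, if $b = 0$ (forcing $a \geq 1$), then $\0_T = V_{a-1}$ with $a - 1 < r$. Finally, if both $a \geq 1$ and $b \geq 1$, then the set $\{(i,j) \mid i \leq a-1 \text{ or } j \leq b-1\}$ matches the defining condition of $L_{(a-1,b-1)}$, and the inequalities $a-1 < r$, $b-1 < s$ hold for the same reason as above.

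The argument is essentially a bookkeeping exercise once \Cref{cor:GCompIsRect} is in hand; the only mildly subtle point is justifying that $\0_T$ literally equals the complement of $\ij{r,s}_T$, which uses nothing more than the hypothesis that there are exactly two components. No further compatibility or saturation machinery is needed. The main obstacle, if any, is simply being careful about the boundary cases (e.g.\ $a = r$ or $b = s$) so that the indices $\ell, k$ produced actually satisfy $\ell < r$ and $k < s$ as required by the definitions in \Cref{rem:shapes}.
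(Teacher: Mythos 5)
Your proof is correct and follows essentially the same route as the paper's: apply \Cref{cor:GCompIsRect} to see that $\ij{r,s}$ is a rectangle and then identify $\0$ as its complement, which forces the shapes $V_\ell$, $H_k$, or $L_{(\ell,k)}$. The only difference is that you spell out the case analysis on the smallest vertex $(a,b)$ that the paper leaves implicit in ``it follows that $\0$ must be as described,'' which is a fine (indeed slightly more careful) presentation of the same argument.
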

\begin{proof}
 Since $T$ has exactly two connected components, those components must be $\0$ and $\ij{r,s}$. By \Cref{cor:GCompIsRect}, $\ij{r,s}$ is a rectangle. Since $\0$ is the complement of $\ij{r,s}$, it follows that $\0$ must be as described in the proposition.
\end{proof}

\begin{remark}\label{rem:symmetry} We think of $V_{\ell}$ as a vertical rectangle and $H_{k}$ as a horizontal rectangle, however,  this terminology of ``vertical'' and ``horizontal'' depends on our particular choice of coordinates, i.e., the $r$ in $\C$ in the $x$-direction and the $s$ in the $y$-direction. As arbitrary $p^r$ and $q^s$ are  interchangeable, some of our statements will follow from symmetry.
\end{remark}

 We are now ready to discuss when a $\C$-transfer system with exactly two connected components is lesser simply paired. 
 We first show that if $\0$ is a horizontal rectangle with more than one row of vertices (or, equivalently, a vertical rectangle with more than one column of vertices) then $T$ is not lesser simply paired.

\begin{lemma}\label{lem:ThickNotLSP}
     Suppose $T$ is a $\C$-transfer system with exactly two connected components.  If $\0=H_k$ for some $k>0$ or $\0=V_{\ell}$ for some $\ell>0$, then $T$ is not lesser simply paired. 
\end{lemma}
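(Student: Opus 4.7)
The plan is to prove the lemma by exhibiting, for an arbitrary such $T$, an explicit transfer system $T'$ strictly between $\Hull(T)$ and $T_c$ that is compatible with $T$. By the symmetry between $p^r$ and $q^s$ noted in \Cref{rem:symmetry}, it suffices to treat the case $\0 = H_k$ with $k \ge 1$. My candidate is
\[
T' := \Hull(T) \cup E, \qquad E := \{ (a,0) \to (a',b) : 0 \le a \le a' \le r,\ k+1 \le b \le s \},
\]
i.e.\ adjoin to $\Hull(T)$ every edge from row $0$ of $\0$ to a vertex of $\ij{r,s}$.

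First I would verify that $T'$ is itself a $\C$-transfer system. The subgroup and reflexivity conditions are immediate, and conjugation is trivial. For restriction, restricting a new edge $(a,0) \to (a',b)$ by any $(x,y)$ yields an edge whose source lies in row $0$ since $\min(0,y)=0$; when $y \ge k+1$ this lies in $E$, and when $y \le k$ both endpoints lie in $\0 = H_k$ so the result is in $\Hull(T)$. For transitivity, a new edge can only be chained with a $\Hull(T)$-edge inside $\0$ before it (source of target in row $0$ by the subgroup condition) or with a $\Hull(T)$-edge inside $\ij{r,s}$ after it (since its target lies in $\ij{r,s}$), producing in either case another edge in $E$.

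Next I would verify that $(T, T')$ is compatible. By \Cref{rem:CpqCompDiags} it suffices to check the condition for incomparable $B, C$. Suppose $B \cap C \to B \in T'$ and $B \to A \in T$. If the edge $B \cap C \to B$ lies in $\Hull(T)$, then compatibility of $(T,\Hull(T))$ given by \Cref{cor:hull} (via \Cref{prop:SaturTCompatible}) gives $C \to A \in \Hull(T) \subseteq T'$. Otherwise $B \cap C \to B \in E$, so $B \cap C$ is in row $0$ and $B \in \ij{r,s}$. This can only occur in the first (vertical) configuration of \Cref{fig:CpqCompatDiags}, since in the horizontal configuration $B \cap C$ and $B$ share a row. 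Writing $B \cap C = (b_1,0)$ and $B = (b_1,b_2)$ with $b_2 \ge k+1$, incomparability of $B$ and $C$ forces $C = (c_1,0)$ with $c_1 > b_1$; any $A \ge B,C$ then has $a_1 \ge c_1$ and $a_2 \ge b_2 \ge k+1$, so $C \to A = (c_1,0) \to (a_1,a_2) \in E \subseteq T'$.

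Finally I would observe that $T'$ is strictly larger than $\Hull(T)$ and strictly smaller than $T_c$. The edge $(0,0) \to (0,k+1)$ lies in $E$ but not in $\Hull(T)$, so $T' \supsetneq \Hull(T)$. For the other inclusion, since $k \ge 1$ the vertex $(0,1)$ lies in $\0$ while $(0,k+1)$ lies in $\ij{r,s}$, so $(0,1) \to (0,k+1) \in T_c$ is not in $\Hull(T)$ (different components) and not in $E$ (source not in row $0$), giving $T' \subsetneq T_c$. The main obstacle is keeping the compatibility verification tidy; the key simplification is that every new edge has source in row $0$ of $\0$ and target in $\ij{r,s}$, which collapses the case analysis to the single non-vacuous configuration worked out above.
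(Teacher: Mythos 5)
Your proof is correct, and it takes a genuinely different route from the paper's. The paper works ``from below'': it invokes \Cref{lem:smallestcompatible} to build the \emph{smallest} transfer system $T'$ compatible with $T$ containing the single edge $(0,0)\to(0,k+1)$, then shows non-completeness by arguing that the closure process never forces an edge $(a,b)\to(x,y)$ with $b>0$ — an argument that requires carefully tracking which edges can arise at each stage of the algorithm. You work ``from above'': you guess the explicit candidate $T'=\Hull(T)\cup E$ and verify directly that it is a transfer system, compatible with $T$, and strictly between $\Hull(T)$ and $T_c$. In fact the two constructions produce the same object (the paper's closure also contains exactly the edges $(a,0)\to(x,y)$ with $a\le x$ and $(x,y)$ in the top component), so you have identified the paper's minimal $T'$ and certified it from the opposite direction. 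Your approach buys self-containedness — it needs neither \Cref{lem:smallestcompatible} nor its bookkeeping, only \Cref{fact:HullCompsComplete} (which you implicitly use when restricting a new edge into $H_k$, to know the resulting edge lies in $\Hull(T)$; worth citing explicitly there) and \Cref{prop:SaturTCompatible} for the $\Hull(T)$ case of compatibility — while the paper's approach buys the extra information that this $T'$ is minimal among compatible extensions of the added edge, which is the reusable template (\Cref{rem:HowToShowNotLSP}) it also deploys for \Cref{thm:3IsNotLSP} and the L-shaped case. Your key structural observation — every edge of $E$ has source in row $0$ and target in $\ij{r,s}$, so in a compatibility square with incomparable $B,C$ the edge $B\cap C\to B$ must be the vertical configuration, forcing $C=(c_1,0)$ and hence $C\to A\in E$ — is exactly what makes the direct verification collapse to one case, and your incomparability argument ($c_1\le b_1$ would give $C\le B$) is sound.
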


\begin{proof}
Suppose $\0_T$ is $H_k$ for some $k>0$. Thus, $\0_T$ is a horizontal rectangle of height $k$ and the vertex $(0,k+1)$ is the smallest vertex in $\ij{r,s}_T$. The proof when $\0_T=V_{\ell}$ is analogous by symmetry, see \Cref{rem:symmetry}.

Let $T'$ be the smallest transfer system compatible with $T$ that contains the edge $(0,0)\to (0,k+1)$. We will use the explicit construction of $T'$ from \Cref{lem:smallestcompatible}.
By \Cref{lem:ArromFromSmallest} and transitivity, $T'$ contains an edge $(0,0)\to (x,y)$ for all $(x,y)$ in $\ij{r,s}_T$.  Using the compatibility diagrams from \Cref{rem:CpqCompDiags} with $A=(x,y)$, $B=(0,k+1)$ and $C=(a,0)$ for some $0<a<x$, it follows that $T'$ contains the edge $(a,0)\to(x,y)$ for all $(x,y)\in \ij{r,s}_T$ and all $0<a<x$.

To show that $T'$ is not complete, let $(a,b)$ be an arbitrary vertex in $\0_T$ with $b>0$ and let $(x,y)$ be an arbitrary vertex in $\ij{r,s}_T$. We will show that $T'$ does not contain the edge $(a,b)\to (x,y)$. First we note that such an edge cannot arise by restriction or transitivity  in $T'$ because there is no such edge in $T$ to begin with since the source and target lie in different components. The addition of $(0,0)\to(0,k+1)$ does not affect that.

Next, in order for such an edge to arise from compatibility, using the diagrams from \Cref{rem:CpqCompDiags}, $A$ must be $(x,y)$ and $C$ must be $(a,b)$. 
As $T$ needs to contain the edge $B \to A$, $B$ must be another vertex in $\ij{r,s}_T$.
As $C \in \0_T$ and $B \cap C \to C$ lies in $T$, $B \cap C$ must be in $\0_T$ too. Furthermore, $B \cap C \to B$ lies in $T'$ but not in $T$. 
Therefore, $B \cap C$ must be $(i,0)$ for some $0 \le i \le r$ since we showed that the edges so far obtained in $T'$ and not in $T$ from compatibility, transitivity, or restriction are of this form and $T'$ is the smallest such transfer system compatible with $T$.  
 However, if $C=(a,b)$, then $B\cap C$ cannot equal $(i,0)$ because if $B=(a_1,b_2)$, then the second coordinate of $B \cap C$ is the minimum of $b$ and $b_2$, which are both greater than 0 by assumption. Thus, compatibility does not require that $T'$ contain an edge $(a,b) \to (x,y)$. 
 
 Therefore, $T'$ is not complete, and hence $T$ is not lesser simply paired.\end{proof}

\begin{example}\label{ex:RectangleEx} \Cref{fig:2CompNotLSPEx} shows a $C_{p^3q^3}$-transfer system with two connected components such that $\0=H_1$. The transfer system $T'$ is the smallest transfer system that contains the pink edge $(0,0)\to (0,2)$ such that $(T,T')$ is a compatible pair. The green edges are the  edges in $T'$ other than $(0,0)\to (0,2)$ that are not in $T$. As in \Cref{ex:3NotLSPEx}, $T'$ must contain the green edges in order for it to be a $\C$-transfer system that is compatible with $T$. Since $T'$ is neither $\Hull(T)$ nor complete, it follows that $T$ is not lesser simply paired.
\end{example}

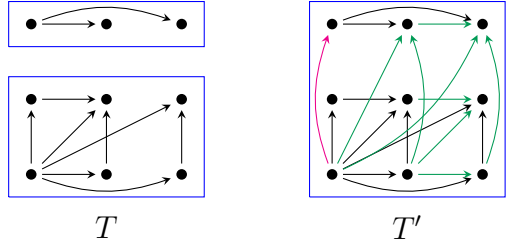
\begin{figure}[hbt!]
    \centering
    \definecolor{nicegreen}{HTML}{009B55}
\begin{tikzpicture}[>=stealth, baseline=(current bounding box.center), bend angle=20]
\fill (0,0) circle (2pt);
\fill (1,0) circle (2pt);
\fill (2,0) circle (2pt);

\fill (0,1) circle (2pt);
\fill (1,1) circle (2pt);
\fill (2,1) circle (2pt);

\fill (0,2) circle (2pt);
\fill (1,2) circle (2pt);
\fill (2,2) circle (2pt);

\fill (4,0) circle (2pt);
\fill (5,0) circle (2pt);
\fill (6,0) circle (2pt);

\fill (4,1) circle (2pt);
\fill (5,1) circle (2pt);
\fill (6,1) circle (2pt);

\fill (4,2) circle (2pt);
\fill (5,2) circle (2pt);
\fill (6,2) circle (2pt);
\node (00a) at (0,0){};
\node (10a) at (1,0){};
\node (20a) at (2,0){};
\node (01a) at (0,1){};
\node (11a) at (1,1){};
\node (21a) at (2,1){};
\node (02a) at (0,2){};
\node (12a) at (1,2){};
\node (22a) at (2,2){};
\node (00b) at (4,0){};
\node (10b) at (5,0){};
\node (20b) at (6,0){};
\node (01b) at (4,1){};
\node (11b) at (5,1){};
\node (21b) at (6,1){};
\node (02b) at (4,2){};
\node (12b) at (5,2){};
\node (22b) at (6,2){};
\draw[->] (00a) to (01a);
\draw[->, bend right] (00a) to (20a);
\draw[->] (00a) to (11a);
\draw[->] (00a) to (21a);
\draw[->] (00a) to (10a);

\draw[->] (10a) to (11a);
\draw[->] (20a) to (21a);
\draw[->] (01a) to (11a);

\draw[->] (02a) to (12a);
\draw[->,bend left] (02a) to (22a);

\draw[->] (00b) to (01b);
\draw[->, bend right] (00b) to (20b);
\draw[->] (00b) to (11b);
\draw[->] (00b) to (21b);
\draw[->] (00b) to (10b);

\draw[->] (10b) to (11b);
\draw[->] (20b) to (21b);
\draw[->] (01b) to (11b);

\draw[->] (02b) to (12b);
\draw[->,bend left] (02b) to (22b);

\draw[->, nicegreen, bend right] (00b) to (22b);
\draw[->, nicegreen] (00b) to (12b);
\draw[->, nicegreen, bend right] (10b) to (12b);
\draw[->, nicegreen] (10b) to (21b);
\draw[->, nicegreen] (10b) to (20b);
\draw[->, nicegreen, bend right] (20b) to (22b);
\draw[->, nicegreen] (11b) to (21b);
\draw[->, nicegreen] (12b) to (22b);

\draw[->, magenta, bend left] (00b) to (02b);

\draw[blue] (-0.3,-0.3)--(-0.3,1.3)--(2.3,1.3)--(2.3,-0.3)--(-0.3,-0.3);
\draw[blue] (-0.3,1.7)--(-0.3,2.3)--(2.3,2.3)--(2.3,1.7)--(-0.3,1.7);
\draw[blue]  (3.7,-0.3)--(6.3,-0.3)--(6.3,2.3)--(3.7,2.3)--(3.7,-0.3);
\node at (1,-0.7){$T$};
\node at (5,-0.7){$T'$};
\end{tikzpicture}
    \caption{The transfer system $T$ has two components with $\0=H_1$. 
    Since $T$ is compatible with $T'$, it follows that $T$ is not lesser simply paired.}
    \label{fig:2CompNotLSPEx}
\end{figure}

Next, we show that if $\0$ is L-shaped then $T$ is not lesser simply paired.

\begin{lemma}\label{lem:LNotLSP}
     Suppose $T$ is a $\C$-transfer system with exactly two connected components.  If $\0$ is $L_{(\ell,k)}$ for some $\ell<r$ and $k<s$, then $T$ is not lesser simply paired.
\end{lemma}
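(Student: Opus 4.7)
The plan is to mimic the strategy of \Cref{lem:ThickNotLSP}: add an edge from $(0,0)$ to the smallest vertex of $\ij{r,s}$, form the smallest compatible transfer system $T'$ via \Cref{lem:smallestcompatible}, and exhibit a missing edge that prevents $T'$ from being complete.

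First I would set up the geometry. Since $\0 = L_{(\ell,k)}$, \Cref{prop:2CompShapes} tells us that $\ij{r,s}$ must be the rectangle $\{(i,j) : \ell+1 \le i \le r,\ k+1 \le j \le s\}$, whose smallest vertex is $(\ell+1, k+1)$. I would let $T'$ be the smallest transfer system compatible with $T$ containing the edge $(0,0) \to (\ell+1, k+1)$. Since $(0,0) \in \0$ and $(\ell+1, k+1) \in \ij{r,s}$ lie in different components, this edge is not in $\Hull(T)$, so $T' \neq \Hull(T)$.

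Next I would apply \Cref{lem:smallestcompatible} to describe $T'$ step by step. Restricting $(0,0) \to (\ell+1,k+1)$ along any vertex yields only edges with source $(0,0)$ and target inside $L_{(\ell,k)} \cup \{(\ell+1,k+1)\}$, adding nothing beyond $(0,0) \to (\ell+1,k+1)$. By \Cref{lem:ArromFromSmallest}, $T$ already contains $(\ell+1,k+1) \to (x,y)$ for every $(x,y) \in \ij{r,s}$, so transitivity produces the edges $(0,0) \to (x,y)$ for all $(x,y) \in \ij{r,s}$. Further restrictions and the (trivial) conjugation step produce no further edges, so after Steps (1)--(3) the only edges in $T'$ not in $T$ are these edges with source $(0,0)$.

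The main work is to show that Step (4) also produces no new edges that cross from $\0$ into $\ij{r,s}$, beyond those already added. Following the diagrams in \Cref{rem:CpqCompDiags}, a newly forced edge $C \to A$ requires $B \to A$ in $T$ (so $A, B$ lie in the same component) and $B \cap C \to B$ in $T'$. Arguing by cases on which components contain $A$ and $B$, the only nontrivial case is $A, B \in \ij{r,s}$ with $C \in \0$. Here both coordinates of $B$ are at least $(\ell+1,k+1)$, while $C \in L_{(\ell,k)}$ forces either $C_1 \le \ell$ or $C_2 \le k$, so $B \cap C$ inherits that inequality and lies in $\0$. Therefore $B \cap C \to B$ is an edge of $T'$ that crosses from $\0$ to $\ij{r,s}$; by Steps (1)--(3) such an edge must have source $(0,0)$, forcing $C = (0,0)$, in which case $C \to A$ is already present. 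Hence no edges of the form $C \to A$ with $C \in \0 \setminus \{(0,0)\}$ and $A \in \ij{r,s}$ are added.

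To finish, I would exhibit the edge $(0,k+1) \to (\ell+1,k+1)$, which is a valid subgroup relation since $k+1 \le s$ and $\ell+1 \le r$, with $(0,k+1) \in L_{(\ell,k)} = \0$ and $(\ell+1,k+1) \in \ij{r,s}$. By the case analysis above this edge is not in $T'$, so $T'$ is strictly smaller than $T_c$. Combined with $T' \neq \Hull(T)$, this shows $T$ is not lesser simply paired. The main obstacle I expect is organizing the case analysis for Step (4) cleanly, since one must rule out all the ways that compatibility could inadvertently produce a non-$(0,0)$-sourced edge from $\0$ to $\ij{r,s}$; the L-shape is what ensures that every $C \in \0$ has $B \cap C$ remaining in $\0$.
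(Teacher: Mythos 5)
Your proposal is correct and takes essentially the same route as the paper's proof: you add the edge $(0,0)\to(\ell+1,k+1)$, construct the smallest compatible $T'$ via \Cref{lem:smallestcompatible} and \Cref{lem:ArromFromSmallest}, and use the L-shape to argue that in any compatibility diagram with $A,B\in\ij{r,s}$ and $C\in\0$ the intersection $B\cap C$ stays in $\0$, so the red edge would need source $(0,0)$, forcing $C=(0,0)$ and yielding nothing new. Your explicit handling of the case $C=(0,0)$ and your exhibiting the specific missing edge $(0,k+1)\to(\ell+1,k+1)$ are minor refinements of the paper's version, which instead asserts directly that $T'$ consists of $\Hull(T)$ together with the $(0,0)$-sourced edges and hence is not complete.
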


\begin{proof} Assume $\0_T = L_{(\ell,k)}$, so the smallest vertex in $\ij{r,s}_T$ is $(\ell+1,k+1)$. Using \Cref{lem:smallestcompatible} and \Cref{rem:HowToShowNotLSP}, let $T'$ be the the smallest transfer system compatible with $T$ that contains the edge $(0,0)\to(\ell+1,k+1)$. We will show that $T'$ is not complete. 

Since $(\ell+1,k+1)$ is the smallest edge in $\ij{r,s}_T$, by \Cref{lem:ArromFromSmallest} and transitivity, $T'$ contains all edges $(0,0)\to (i,j)$ for all $(i,j)\in \ij{r,s}_T$. By restriction, $T'$ contains all edges $(0,0)\to(i,j)$ for $i<\ell+1$ and $j<k+1$, but all such edges are already in $\Hull(T)$ (and by \Cref{prop:BigTContainsHull}, $T'$ contains $\Hull(T)$). No further edges will arise from restriction and transitivity. 

We argue that $T'$ contains no arrows other than $\Hull(T)$ and the arrows $(0,0) \to (i,j)$ for all $0 \leq i \leq r$ and $0 \leq j \leq s$.

We now consider the compatibility diagrams of \Cref{rem:CpqCompDiags}. In order to obtain further edges from compatibility, we need to choose subgroups $A$, $B$, and $C$ such that $A$ and $B$ are in $\ij{r,s}_T$, $C$ in $\ij{0,0}_T$, and $B\cap C = (0,0)$. However, since $B=(i,j)$ for $i \ge \ell+1$ and $j \ge k+1$, there is no $C$ in $\ij{0,0}_T$ such that $B\cap C = (0,0)$.  Thus, compatibility with $T$ does not require that $T'$ contain any other edges. Therefore, $T'$ is not complete, showing that $T$ is not lesser simply paired.
\end{proof}

\begin{example}\label{ex:LShapedEx}
    In \Cref{fig:LShapedNotLSPEx}, $T$ has two components with $\0=L_{(0,0)}$, and $T'$ is the smallest transfer system containing the pink edge $(0,0)\to(1,1)$ such that $(T,T')$ is a compatible pair. The green edges are the other edges in $T' \backslash T$ induced by the pink edge. It follows that $T$ is not lesser simply paired.
\end{example}

\begin{figure}[hbt!]
    \centering
    
 \definecolor{nicegreen}{HTML}{009B55}
\begin{tikzpicture}[>=stealth, baseline=(current bounding box.center), bend angle=20]
\fill (0,0) circle (2pt);
\fill (1,0) circle (2pt);
\fill (2,0) circle (2pt);

\fill (0,1) circle (2pt);
\fill (1,1) circle (2pt);
\fill (2,1) circle (2pt);

\fill (0,2) circle (2pt);
\fill (1,2) circle (2pt);
\fill (2,2) circle (2pt);

\fill (4,0) circle (2pt);
\fill (5,0) circle (2pt);
\fill (6,0) circle (2pt);

\fill (4,1) circle (2pt);
\fill (5,1) circle (2pt);
\fill (6,1) circle (2pt);

\fill (4,2) circle (2pt);
\fill (5,2) circle (2pt);
\fill (6,2) circle (2pt);
\node (00a) at (0,0){};
\node (10a) at (1,0){};
\node (20a) at (2,0){};
\node (01a) at (0,1){};
\node (11a) at (1,1){};
\node (21a) at (2,1){};
\node (02a) at (0,2){};
\node (12a) at (1,2){};
\node (22a) at (2,2){};
\node (00b) at (4,0){};
\node (10b) at (5,0){};
\node (20b) at (6,0){};
\node (01b) at (4,1){};
\node (11b) at (5,1){};
\node (21b) at (6,1){};
\node (02b) at (4,2){};
\node (12b) at (5,2){};
\node (22b) at (6,2){};
\draw[->] (00a) to (01a);
\draw[->, bend right] (00a) to (20a);
\draw[->] (00a) to (10a);
\draw[->, bend left] (00a) to (02a);
\draw[->] (10a) to (20a);
\draw[->] (01a) to (02a);

\draw[->] (11a) to (12a);
\draw[->] (11a) to (21a);
\draw[->] (11a) to (22a);

\draw[->] (00b) to (01b);
\draw[->, bend right] (00b) to (20b);
\draw[->] (00b) to (10b);
\draw[->, bend left] (00b) to (02b);
\draw[->] (10b) to (20b);
\draw[->] (01b) to (02b);

\draw[->] (11b) to (12b);
\draw[->] (11b) to (21b);
\draw[->] (11b) to (22b);

\draw[->, nicegreen, bend right] (00b) to (22b);
\draw[->, nicegreen] (00b) to (12b);
\draw[->, nicegreen] (00b) to (21b);

\draw[->, nicegreen] (12b) to (22b);
\draw[->, nicegreen] (21b) to (22b);

\draw[->, magenta] (00b) to (11b);

\draw[blue] (-0.3,-0.3)--(-0.3,2.3)--(0.3,2.3)--(0.3,0.3)--(2.3,0.3)--(2.3,-0.3)--(-0.3,-0.3);
\draw[blue] (0.7,0.7)--(0.7,2.3)--(2.3,2.3)--(2.3,0.7)--(0.7,0.7);

\draw[blue]  (3.7,-0.3)--(6.3,-0.3)--(6.3,2.3)--(3.7,2.3)--(3.7,-0.3);

\node at (1,-0.7){$T$};
\node at (5,-0.7){$T'$};
\end{tikzpicture}
    \caption{The transfer systems $T$ and $T'$ form a compatible pair, showing that $T$ is not lesser simply paired.} 
    \label{fig:LShapedNotLSPEx}
\end{figure}
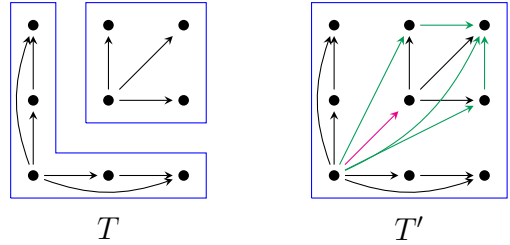

Finally, we prove that a transfer system $T$ is lesser simply paired if and only if $\0$ is either the bottom row of vertices (i.e., $H_0$) or the leftmost column of vertices (i.e., $V_0$) of $T$.

\begin{theorem}\label{thm:When2CompLSP}
     Suppose $T$ is a $\C$-transfer system with two connected components. Then $T$ is   lesser simply paired if and only if   $\0$ is $V_0$ or $H_0$.
\end{theorem}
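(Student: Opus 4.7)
The plan is to combine the shape classification of \Cref{prop:2CompShapes} with \Cref{lem:ThickNotLSP} and \Cref{lem:LNotLSP} for the ``only if'' direction, and to carry out an explicit compatibility cascade for the ``if'' direction.

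For the ``only if'' direction, I would argue by contrapositive: suppose $T$ has two connected components with $\0 \neq V_0$ and $\0 \neq H_0$. By \Cref{prop:2CompShapes}, $\0$ must be $V_\ell$ with $\ell \geq 1$, $H_k$ with $k \geq 1$, or $L_{(\ell,k)}$ for some $0 \leq \ell < r$, $0 \leq k < s$. The first two cases are handled by \Cref{lem:ThickNotLSP} and the third by \Cref{lem:LNotLSP}, so $T$ fails to be lesser simply paired in every case. This direction is essentially immediate.

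For the ``if'' direction, by symmetry (\Cref{rem:symmetry}) I may assume $\0 = H_0$, so $\ij{r,s}_T = \{(i,j) : j \geq 1\}$ with smallest vertex $(0,1)$. Given a transfer system $T'$ forming a compatible pair with $T$, I want to show $T' \in \{\Hull(T), T_c\}$. Suppose $T' \supsetneq \Hull(T)$. By \Cref{fact:HullCompsComplete}, every within-component edge is already in $\Hull(T)$, so any $e \in T' \setminus \Hull(T)$ must have source in $\0$ and target in $\ij{r,s}$; by the subgroup condition this forces $e\colon (a,0) \to (b,c)$ with $c \geq 1$. Restricting $e$ by the vertex $(0,1)$ immediately yields $(0,0) \to (0,1)$ in $T'$. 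Composing this edge with $(0,1) \to (i,j) \in \Hull(T)$ (present for all $j \geq 1$) and combining with $(0,0) \to (i,0) \in \Hull(T)$, transitivity gives $(0,0) \to (i,j) \in T'$ for every $(i,j)$.

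The final step is to force all remaining cross-component edges $(a,0) \to (x,y)$ with $y \geq 1$, and here \Cref{lem:ArromFromSmallest} is the decisive input: since $(0,1)$ is the smallest vertex of $\ij{r,s}_T$, the edge $(0,1) \to (x,y)$ lies in $T$ itself for every $(x,y) \in \ij{r,s}_T$. Applying the compatibility condition (\Cref{def:CompatTSs}, cf.\ \Cref{rem:CpqCompDiags}) with $A = (x,y)$, $B = (0,1)$, $C = (a,0)$, the edge $B \to A$ is in $T$ and $B \cap C = (0,0) \to (0,1) = B$ lies in $T'$ by the previous paragraph. Compatibility then forces $(a,0) \to (x,y) \in T'$, so together with the within-component edges of $\Hull(T)$ we obtain $T' = T_c$. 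The main subtlety will be noticing that every stray cross-component edge restricts down to the single generator $(0,0) \to (0,1)$ and that this generator, combined with \Cref{lem:ArromFromSmallest}, is precisely enough to cascade via compatibility to the complete transfer system; when $\0$ is thicker or L-shaped, this cascade breaks down exactly as exhibited in the preceding lemmas.
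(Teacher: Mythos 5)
Your proposal is correct and follows essentially the same route as the paper: the forward direction is the identical appeal to \Cref{prop:2CompShapes}, \Cref{lem:ThickNotLSP}, and \Cref{lem:LNotLSP}, and the reverse direction uses the same key moves---restricting a stray cross-component edge to obtain $(0,0)\to(0,1)$, then invoking \Cref{lem:ArromFromSmallest} and the compatibility diagram with $B=(0,1)$, $B\cap C=(0,0)$ to force completeness. The only cosmetic difference is that the paper applies compatibility once to produce $(r,0)\to(r,1)$ and then restricts and composes, whereas you apply it directly to each target edge $(a,0)\to(x,y)$; these are interchangeable.
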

\begin{proof} 
To prove the forward implication, assume that $\0$ is neither $V_0$ nor $H_0$. Then $T$ is not lesser simply paired by \Cref{lem:LNotLSP} or \Cref{lem:ThickNotLSP}.

For the reverse implication, assume that $\0=H_0$ and hence $\ij{r,s}=\ij{0,1}$. We will show that $T$ is lesser simply paired. The proof when $\0=V_0$ is analogous by symmetry. Following  \Cref{rem:HowToShowNotLSP}, we will show that adding \emph{any} edge to $\Hull(T)$ will generate the complete transfer system by compatibility. 

Recall that the only edges not in $\Hull(T)$ are edges between the components $\0$ and $\ij{r,s}$, so let $T'$ be the smallest compatible transfer system containing the edge $(i,0)\to (x,y)$ for an arbitrary $(i,0)$ in $H_0$ and arbitrary $(x,y)$ in $\ij{r,s}_T$.
By restriction along $(0,1)$, $T'$ contains the edge $(0,0) \to (0,1)$, and by \Cref{lem:ArromFromSmallest}, $T$ contains the edges $(0,1)\to (r,1)$ and $(0,0)\to (r,0)$. Thus, by compatibility, $T'$ contains the edge $(r,0)\to(r,1)$, see diagram below.

\[
\begin{tikzpicture}
    \node (A) at (1.6,2){$A=(r,1)$};
    \node (B) at (-1.6,2){$B=(0,1)$};
    \node (BnC) at (-1.6,0){$B\cap C=(0,0)$};
    \node (C) at (1.6,0){$C=(r,0)$};
    \draw[->] (BnC) -- (C) node[midway, above] {\tiny in $T$};
    \draw[->, blue, dashed] (C) -- (A) node[midway, right] {\tiny Need in $T'$};
    \draw[->] (B)--(A) node[midway, above] {\tiny in $T$};
    \draw[->, red] (BnC)--(B) node[midway, left] {\tiny in $T'$};
\end{tikzpicture}
\]
By restriction along $(i,1)$, $T'$ contains all edges $(i,0) \to (i,1)$. Since $T'$ contains $\Hull(T)$, \Cref{fact:HullCompsComplete} and transitivity imply that $T'$ contains all  edges, making $T'$ complete. It follows that $T$ is lesser simply paired.
\end{proof}

 \begin{example}\label{ex:LSPTS}
     In \Cref{fig:TwoCompLSPEx}, the transfer system $T$ (on the left) has two components and $\0=H_0$. We will show that if a transfer system $T'$ contains the edge $(1,0)\to(1,2)$, then in order for $(T,T')$ to be a compatible pair, $T'$ must be complete. This argument works for any transfer system that contains any edge between the two components of $T$, thus demonstrating that $T$ is lesser simply paired.

     The middle diagram  of \Cref{fig:TwoCompLSPEx} shows $\Hull(T)$ along with the edge $(1,0)\to (1,2)$ in pink. We ask what other edges must be added to the middle diagram to create a $T'$ that is compatible with $T$? First, by restriction, $T'$ must contain the green arrows $(0,0)\to (0,1)$ and $(0,0)\to (0,2)$ in the diagram on the right. Applying the compatibility diagrams of \Cref{rem:CpqCompDiags} to the  vertices labeled $A$, $B$, $C$, and $B\cap C$ shows that $T'$ must contain the dashed edge $C\to A$. Then $T'$ contains all other arrows by restriction and transitivity. Hence, $T'$ is complete.
 \end{example}

 \begin{figure}[hbt!]
     \centering
      \definecolor{nicegreen}{HTML}{009B55}
\begin{tikzpicture}[>=stealth, baseline=(current bounding box.center), bend angle=20]
\fill (0,0) circle (2pt);
\fill (1,0) circle (2pt);
\fill (2,0) circle (2pt);

\fill (0,1) circle (2pt);
\fill (1,1) circle (2pt);
\fill (2,1) circle (2pt);

\fill (0,2) circle (2pt);
\fill (1,2) circle (2pt);
\fill (2,2) circle (2pt);

\fill (4,0) circle (2pt);
\fill (5,0) circle (2pt);
\fill (6,0) circle (2pt);

\fill (4,1) circle (2pt);
\fill (5,1) circle (2pt);
\fill (6,1) circle (2pt);

\fill (4,2) circle (2pt);
\fill (5,2) circle (2pt);
\fill (6,2) circle (2pt);

\fill (8,0) circle (2pt);
\fill (9,0) circle (2pt);
\fill (10,0) circle (2pt);

\fill (8,1) circle (2pt);
\fill (9,1) circle (2pt);
\fill (10,1) circle (2pt);

\fill (8,2) circle (2pt);
\fill (9,2) circle (2pt);
\fill (10,2) circle (2pt);
\node (00a) at (0,0){};
\node (10a) at (1,0){};
\node (20a) at (2,0){};
\node (01a) at (0,1){};
\node (11a) at (1,1){};
\node (21a) at (2,1){};
\node (02a) at (0,2){};
\node (12a) at (1,2){};
\node (22a) at (2,2){};
\node (00b) at (4,0){};
\node (10b) at (5,0){};
\node (20b) at (6,0){};
\node (01b) at (4,1){};
\node (11b) at (5,1){};
\node (21b) at (6,1){};
\node (02b) at (4,2){};
\node (12b) at (5,2){};
\node (22b) at (6,2){};
\node (00c) at (8,0){};
\node (10c) at (9,0){};
\node (20c) at (10,0){};
\node (01c) at (8,1){};
\node (11c) at (9,1){};
\node (21c) at (10,1){};
\node (02c) at (8,2){};
\node (12c) at (9,2){};
\node (22c) at (10,2){};
\node at (1,-0.7){$T$};
\node at (5,-0.7){$\Hull(T)$};
\tiny
\node at (7.25,0){$B\cap C$};
\node at (7.45,1){$B$};
\node at (10.45,1){$A$};
\node at (10.45,0){$C$};

\draw[blue] (-0.3,-0.3)--(-0.3,0.3)--(2.3,0.3)--(2.3,-0.3)--(-0.3,-0.3);
\draw[blue] (-0.3,0.7)--(-0.3,2.3)--(2.3,2.3)--(2.3,0.7)--(-0.3,0.7);

\draw[blue] (3.7,-0.3)--(3.7,0.3)--(6.3,0.3)--(6.3,-0.3)--(3.7,-0.3);
\draw[blue] (3.7,0.7)--(3.7,2.3)--(6.3,2.3)--(6.3,0.7)--(3.7,0.7);

\draw[blue] (7.7,-0.3)--(7.7,0.3)--(10.3,0.3)--(10.3,-0.3)--(7.7,-0.3);
\draw[blue] (7.7,0.7)--(7.7,2.3)--(10.3,2.3)--(10.3,0.7)--(7.7,0.7);
\draw[->] (00a) to (10a);
\draw[->, bend right] (00a) to (20a);

\draw[->] (01a) to (02a);
\draw[->] (01a) to (12a);
\draw[->] (01a) to (11a);
\draw[->,bend right] (01a) to (21a);
\draw[->] (11a) to (12a);
\draw[->] (02a) to (12a);
\draw[->] (01a) to (22a);

\draw[->] (00b) to (10b);
\draw[->] (10b) to (20b);
\draw[->, bend right] (00b) to (20b);

\draw[->] (01b) to (02b);
\draw[->] (01b) to (12b);
\draw[->] (01b) to (11b);
\draw[->,bend right] (01b) to (21b);
\draw[->] (11b) to (12b);
\draw[->] (02b) to (12b);
\draw[->,bend left] (02b) to (22b);
\draw[->] (01b) to (22b);
\draw[->] (11b) to (21b);
\draw[->] (11b) to (22b);
\draw[->] (21b) to (22b);
\draw[->] (12b) to (22b);

\draw[->] (00c) to (10c);
\draw[->] (10c) to (20c);
\draw[->, bend right] (00c) to (20c);

\draw[->] (01c) to (02c);
\draw[->] (01c) to (12c);
\draw[->] (01c) to (11c);
\draw[->,bend right] (01c) to (21c);
\draw[->] (11c) to (12c);
\draw[->] (02c) to (12c);
\draw[->,bend left] (02c) to (22c);
\draw[->] (01c) to (22c);
\draw[->] (11c) to (21c);
\draw[->] (11c) to (22c);
\draw[->] (21c) to (22c);
\draw[->] (12c) to (22c);

\draw[->, nicegreen, bend left] (00c) to (02c);
\draw[->, nicegreen] (00c) to (01c);

\draw[->, magenta, bend right] (10b) to (12b);
\draw[->, magenta, bend right] (10c) to (12c);

\draw[->, thick, black,  dashed] (20c) to (21c);

\end{tikzpicture}
     \caption{The transfer system $T$ is lesser simply paired.}
     \label{fig:TwoCompLSPEx}
 \end{figure}
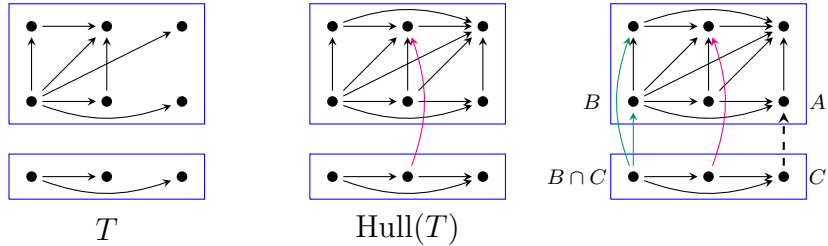
\bigskip

\begin{remark}[Lesser Simply Paired $C_{p^n}$-Transfer Systems]
    Theorems \ref{thm:OneCompLSP}, \ref{thm:3IsNotLSP}, and \ref{thm:When2CompLSP} still apply when $r=0$ or $s=0$, i.e., for $C_{p^n}$-transfer systems. Hence, a $C_{p^n}$-transfer system  $T$ is lesser simply paired if and only if either $T$ has one connected component or $T$ has two connected components and the component of the vertex $0$ consists  of a single vertex. 

    Furthermore, with this description we can enumerate the lesser simply paired transfer systems on $C_{p^n}$. A connected transfer system on $C_{p^n}$, i.e., on the vertex set $\{0,1,\cdots, n\}$ is equivalent to an arbitrary transfer system $T'$ on the vertices $\{1,\cdots, n\}$ with the edge $0 \rightarrow n$ added, and therefore, by restriction, all other edges $0 \rightarrow i$. (In the notation of \cite[Section 3.2]{NinftyOperads}, this means that a connected transfer system $T$ is of the form $T= \emptyset \odot T'$.) We know from \cite{NinftyOperads} that there are $$\Cat(n)=\frac{(2n)!}{(n+1)!n!}$$ such $T'$, where $\Cat(n)$ denotes the $n^{th}$ Catalan number. 

    Similarly, there are $\Cat(n-1)$ transfer systems on $C_{p^n}$ consisting of precisely two connected components with the component of $0$ being a single vertex as we now know how many connected transfer systems there are on the vertices $\{1,\cdots, n\}$. 
Therefore, altogether there are 
    \[
    \Cat(n) + \Cat(n-1) = \frac{5n-1}{4n-2}\cdot\Cat(n)=\frac{(5n-1)(2n-2)!}{(n-1)!(n+1)!}
    \]
    lesser simply paired transfer systems on $C_{p^n}$.

Going one step further, we   compute the proportion of $C_{p^n}$-transfer systems that are lesser simply paired (LSP);

$$\frac{\text{\# of LSP $C_{p^n}$-transfer systems}}{\text{Total \# of $C_{p^n}$-transfer systems}} = \frac{\Cat(n) + \Cat(n-1)}{\Cat(n+1)} = \frac{5n^2+9n-2}{16n^2-4}.$$

\Cref{fig:ProportTable} gives a table of the proportion of $C_{p^n}$-transfer systems that are lesser simply paired for small $n$, and evaluating $\lim_{n\to \infty} \frac{5n^2+9n-2}{16n^2-4}$ tells us that as $n$ increases, the proportion of $C_{p^n}$-transfer systems that are lesser simply paired approaches $0.3125$.

\begin{figure}[hbt!]
\begin{tabular}{llllllll}
\hline
\multicolumn{1}{|c|}{$n$}                                                                                                                          & \multicolumn{1}{c|}{2}   & \multicolumn{1}{c|}{3}   & \multicolumn{1}{c|}{4}    & \multicolumn{1}{c|}{5}    & \multicolumn{1}{c|}{6}    & \multicolumn{1}{c|}{7}    & \multicolumn{1}{c|}{8}    \\ \hline
\multicolumn{1}{|c|}{\begin{tabular}[c]{@{}c@{}}(rounded) proportion\\ of $C_{p^n}$-transfer systems\\ that are lesser simply paired\end{tabular}} & \multicolumn{1}{c|}{0.6} & \multicolumn{1}{c|}{0.5} & \multicolumn{1}{c|}{0.45} & \multicolumn{1}{c|}{0.42} & \multicolumn{1}{c|}{0.41} & \multicolumn{1}{c|}{0.39} & \multicolumn{1}{c|}{0.38} \\ \hline            
\end{tabular}
\caption{Proportion of $C_{p^n}$-transfer systems that are lesser simply paired for small $n$.}
\label{fig:ProportTable}
\end{figure}
\end{remark}

\bibliographystyle{amsalpha}

\end{document}